\newcommand{\NN}{\mathbb{N}}    
\newcommand{\RR}{\mathbb{R}}    
\def\^#1{^{(#1)}}
\def\s^#1{^{\smash{(#1)}}}
\def\:{\colon}
\def\nlspace{\nolinebreak\space}
\def\nls{\nlspace}
\newcommand{\precdot}{\mathrel{\prec\kern-0.9ex\cdot}}
\newcommand{\labelstyle}[1]{\upshape(\textit{#1})}
\newcommand{\mylabel}{\labelstyle{\roman*}}
\newenvironment{myenumerate}{\begin{enumerate}[label=\mylabel]}{\end{enumerate}}
\def\itm#1{{\labelstyle{\romannumeral#1\relax}}}
\newcommand{\freespace}{\kern.07em} 
\newcommand{\free}{\freespace\cdot\freespace} 
\newcommand{\enquote}[1]{``#1''}                                 
\newcommand{\sepline}{\par\smallskip\noindent\hrulefill\par\smallskip}
\newcommand{\TODO}{{\footnotesize\textcolor{red}{TODO}}}
\colorlet{mcolor}{red}
\colorlet{gcolor}{blue}
\newcommand{\msays}[1]{{\footnotesize\textcolor{mcolor}{\textbf M: #1}}}
\newtheoremstyle{mythmstyle} 
    {\parsep}                    
    {\parsep}                    
    {\itshape}                   
    {}                           
    {\bfseries\scshape}          
    {.}                          
    {.5em}                       
    {}  
\newtheoremstyle{mydefstyle} 
    {\parsep}                    
    {\parsep}                    
    {}                   
    {}                           
    {\mdseries\scshape}          
    {.}                          
    {.5em}                       
    {}  
\numberwithin{equation}{section}
\theoremstyle{theorem}
\newtheorem{theorem}{Theorem}[section]
\newtheorem{corollary}[theorem]{Corollary}
\newtheorem{lemma}[theorem]{Lemma}
\newtheorem{proposition}[theorem]{Proposition}
\newtheorem{main}{Theorem}
\theoremstyle{definition}
\newtheorem{example}[theorem]{Example}
\newtheorem{remark}[theorem]{Remark}
\newtheorem{notation}[theorem]{Notation}
\newtheorem{question}[theorem]{Question}
\newtheorem{observation}[theorem]{Observation}
\newtheorem{construction}[theorem]{Construction}
\crefname{theorem}{Theorem}{Theorems}
\crefname{proposition}{Proposition}{Propositions}
\crefname{lemma}{Lemma}{Lemmas}
\crefname{corollary}{Corollary}{Corollaries}
\crefname{remark}{Remark}{Remarks}
\crefname{example}{Example}{Examples}
\crefname{definition}{Definition}{Definitions}
\crefname{problem}{Problem}{Problems}
\crefname{observation}{Observation}{Observation}
\crefname{construction}{Construction}{Construction}
\theoremstyle{theorem}
\providecommand{\customgenericname}{}
\newcommand{\newcustomtheorem}[2]{%
  \newenvironment{#1}[1]
  {%
   \renewcommand\customgenericname{#2}%
   \renewcommand\theinnercustomgeneric{##1}%
   \innercustomgeneric
  }
  {\endinnercustomgeneric}
}
\DeclareMathOperator{\Aut}{Aut}
\DeclareMathOperator{\dist}{dist}
\DeclareMathOperator{\Sym}{Sym}
\DeclareMathOperator{\pot}{pot}  
\def\...{...}
\newcommand{\shortStyle}{\textit}
\newcommand{\ie}{\shortStyle{i.e.,}}
\newcommand{\eg}{\shortStyle{e.g.}}
\newcommand{\wrt}{\shortStyle{w.r.t.}}
\newcommand{\cf}{\shortStyle{cf.}}
\newcommand{\resp}{resp.}
\newcommand{\as}{{a.s.}}
\renewcommand*{\eqref}[1]{%
  \hyperref[{#1}]{\textup{\tagform@{\ref*{#1}}}}%
}
\newcommand{\ball}[3]{B_{#1}({#2},#3)}
\newcommand{\gDomainRange}{\RR_{\ge 0}}
\newcommand{\gFun}{$g\:\gDomainRange\to\gDomainRange$}
\renewcommand{\P}{\mathbb P}
\begin{document}


\expandafter\title
{
(Random) Trees of Intermediate Volume Growth
}
		
\author[G. Kontogeorgiou]{George Kontogeorgiou}
\author[M. Winter]{Martin Winter}
\address{Mathematics Institute, University of Warwick, Coventry CV4 7AL, United Kingdom}
\email{martin.h.winter@warwick.ac.uk}
\email{george.kontogeorgiou@warwick.ac.uk}
	
\subjclass[2010]{05C05, 05C63, 05C80, 26A12, 26A48}
\keywords{trees, uniform growth of graphs, unimodular random rooted trees, Benjamini-Schramm limits, intermediate growth}
		
\date{\today}
\begin{abstract}
For every sufficiently well-behaved function $g\:\RR_{\ge 0}\to\RR_{\ge 0}$ that grows at least linearly and at most exponentially we construct a tree $T$ of~uniform volume growth $g$, that is,
$$C_1\cdot g(r/4)\le |\ball Gvr| \le C_2\cdot g(4r),\quad\text{for all $r\ge 0$ and $v\in V(T)$},$$ 
with $C_1,C_2>0$ and where $\ball Gvr$ denotes the ball of radius $r$ centered at a vertex $v$.
In particular, this yields examples of trees of uniform intermediate (\ie\ super-polynomial and sub-exponential) volume growth.

We use this construction to provide first examples of unimodular random rooted trees of uniform inter\-mediate growth, answering a question by Itai Benjamini. 
We find a peculiar change in structural properties for these trees at  growth $r^{\log\log r}$. 

Our results can be applied to obtain triangulations of $\RR^d$ for $d\ge 2$ with varied uniform growth behaviours, as well as Riemannian metric on $\RR^d$ for the same wide range of growth behaviors.
\end{abstract}

\maketitle




\section{Introduction}

Given a simple graph $G$, a vertex $v\in V(G)$ and $r\ge 0$, the set 
\[\ball Gvr:=\{w\in V(G) \mid d_G(v,w)\leq r\}\]
is called the \emph{ball} of radius $r$ around $v$. 
The growth of the cardinality of these~balls~as $r$ increases is known as the \emph{growth behavior} or \emph{volume growth} of $G$ at the \mbox{vertex $v$}.
The two extreme cases of such growth are exhibited by two \mbox{instructive~examples}, the regular trees (of exponential growth) and the lattice graphs (of polynomial growth).
It is an ongoing endeavor to map the possible growth behaviors in various graph classes, the most famous example potentially being Cayley graphs of finitely generated groups, the central object of study in geometric group \mbox{theory (see \eg \cite{loh2017geometric}).
Ex}\-amples of major results in this regard are the existence of Cayley graphs of \emph{intermediate} growth (that is, super-polynomial but sub-exponential) \cite{grigorchuck}, and the proof that vertex-tran\-sitive graphs can have polynomial~growth for integer exponents only \cite[Theorem 2]{trofimov1985graphs}.

Cayley graphs (and more generally vertex-transitive graphs) automatically have the same growth at every vertex. 
In other graph classes this must be imposed~manually: we say that a graph $G$ is of \emph{uniform} growth if its growth does not vary too much~between vertices.
Following \cite{babaigrowth}, the precise formulation is as follows: there is a function \gFun\ and constants $c_1,C_1,c_2,C_2\in\RR_{>0}$ so that
\begin{equation}\label{eq:uniform_growth}
C_1\cdot g( c_1 r) \;\le\; |\ball Gvr| \;\le\;  C_2\cdot g( c_2 r)\quad\text{for all $r\ge 0$ and $v\in V(G)$}.
\end{equation}
The graph $G$ is then said to be of \emph{uniform growth $g$}.

In this article we construct infinite tree graphs of uniform volume growth for~a wide variety of growth behaviors, including intermediate growth and polynomial growth for non-integer exponents.

Subsequently we demonstrate how our construction gives rise to \emph{unimodular random rooted trees} for the same wide range of growth behaviour, answering~a~question by Itai Benjamini (private communication). 
We probe the structure~of these trees and find a threshold phenomenon happening roughly at the growth rate 
$r^{\log\log r}$.
We identify unimodular trees of intermediate growth where only the root is random, \ie\ they are almost surely (\as) isomorphic to a particular deterministic tree. 


Historically, the interest in such trees has one of its origins in the curious observation, initially from physics, that planar triangulations can have non-quadratic uniform growth \cite{angel2003growth,ambj1997quantum}.
In their landmark paper \cite{benjamini2011recurrence} Benjamini and Schramm demonstrated how any tree of a particular growth can be turned into such a triangulation with a similar growth.
In the same paper they gave first examples for (unimodular) trees of uniform polynomial growth, also for non-integer exponents (it was later found that planar triangulation of sub-quadratic uniform growth are always \enquote{tree-like} \cite{benjamini2021triangulations}).
Following the construction in \cite{benjamini2011recurrence}, our trees yields planar triangulations for a wide range of growth behaviours.
Since our construction in particular yields one-ended trees, we actually obtain triangulations of the Euclidean plane, as well as of higher-dimensional Euclidean spaces. Those in turn correspond to Riemannian metrics on $\RR^d$ of the respective growth behavior.

Our work also follows a broader history of studies on the growth rate of trees. 
Particular attention to exponential growth for trees was given by Timár \cite{timar2014stationary}, where the focus was on the existence of a well-defined basis for the exponential rate (which they call the \emph{exponential growth rate} of a the graph and which is distinct from our use of that term).
Quite recent advancements in this regard were made by Abert, Fraczyk and Hayes \cite{abert2022co}, who proved that this rate is well-defined in the case~of~uni\-modular trees.
Intermediate but not necessarily uniform growth in trees has been studied by Amir and Yang \cite{amir2022branching} as well as the references given therein.

\subsection{Main results}


We establish the existence of deterministic and unimodular~random rooted trees with volume growth $g$ for a wide variety of functions \gFun\ between (and including) polynomial and exponential growth.
The precise~statements are as follows: we first prove

\begin{main} \label{main}
    If \gFun\ is super-additive and (eventually) log-concave, then there exists a tree $T$ of uniform growth $g$. 
\end{main}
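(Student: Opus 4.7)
My plan is to build $T$ as an increasing union $T=\bigcup_{k\ge 0} T_k$ of finite trees, each engineered so that its size is $\approx g(r_k)$ at a suitable geometric scale sequence $r_k$. The first step is to discretize $g$: set $r_k:=2^k$, $N_k:=\lfloor g(r_k)\rfloor$, and $m_k:=\lceil N_{k+1}/N_k\rceil$. Super-additivity of $g$ forces $g(r)\ge c r$ and in particular $N_k\to\infty$, while (eventual) log-concavity of $g$ forces $m_k$ to grow at most like a geometric progression, which will be crucial for bounding errors propagating across scales.

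Next, I would define the $T_k$ recursively: $T_0$ is a path of length $r_0$, and $T_{k+1}$ is obtained from $T_k$ by taking $m_k$ disjoint copies and identifying a selected ``root'' vertex of each copy to a common ``spine'' of length $\approx r_k$, with the attachment points evenly spaced along the spine. The resulting $T_{k+1}$ satisfies $|V(T_{k+1})|\approx m_k|V(T_k)|\approx N_{k+1}$ and has diameter $\approx r_{k+1}$. The gluing is to be done as symmetrically as possible, precisely so that the ball sizes seen from different vertices of $T_{k+1}$ agree up to multiplicative constants. Passing to the direct limit yields the infinite one-ended tree $T$ promised by the introduction.

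The main obstacle is verifying that the growth is \emph{uniform}, \ie\ that \eqref{eq:uniform_growth} holds with the \emph{same} constants for every vertex. The upper bound I would prove by induction on the scale $k$: any $\ball Tvr$ with $r\approx r_k$ can be covered by a uniformly bounded number of translates of $T_{k-1}$-copies, and the log-concavity bound on $m_k$ prevents this count from blowing up. The lower bound is the harder direction: regardless of where the center $v$ sits, the ball $\ball Tvr$ with $r\approx r_k$ must contain at least a definite fraction of some $T_k$-copy. Here I would rely on super-additivity, which lets one paste together partial copies of $T_{k-1}$ lying on either side of a spine segment into an object of the required volume $\approx g(r)$. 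The subtlest point, which I expect to be the technical heart of the proof, is handling vertices that sit close to a high-order spine, where the ambient tree locally looks more like a path than like a copy of $T_k$; one likely has to decorate the spines themselves with auxiliary ``stubs'' of carefully chosen sizes so that even these path-like neighborhoods achieve the full $g(r)$ lower bound.
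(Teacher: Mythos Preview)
Your high-level plan (recursive $T_k$ at scales $r_k=2^k$ with $|T_k|\approx g(r_k)$ and $m_k\approx g(2^{k+1})/g(2^k)$) matches the paper's, but the specific gluing you propose---hanging $m_k$ copies of $T_k$ from a \emph{path spine} of length $\approx r_k$---has a genuine gap. A path of length $2^k$ offers only $\sim 2^k$ attachment points, whereas $m_k$ can be vastly larger: already for $g(r)=e^{\sqrt r}$ one gets $m_k\sim\exp\bigl((\sqrt2-1)\,2^{k/2}\bigr)\gg 2^k$, and the disparity only worsens as $g$ approaches exponential. Even spacing then forces unbounded degree, which is incompatible with uniform growth. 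You also correctly flag the second obstruction---near a high-order spine the $r$-ball sees essentially a path, so the lower bound fails---but your fix (auxiliary stubs) is left unspecified, and calibrating stubs to supply $\gtrsim g(r)$ mass without destroying the upper bound at smaller scales is precisely the hard part you have not carried out.

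The paper resolves both issues with a single idea: replace the path spine by a \emph{full copy of $T_{n-1}$} (the ``central copy'' $\tau_0$), and attach the $\delta_n$ new copies not along a path but at the \emph{apocentric vertices} of $\tau_0$, distributed maximally uniformly. The number of apocentric vertices is $\delta_1\cdots\delta_{n-1}$, which scales like $|T_{n-1}|$ rather than like $\mathrm{diam}(T_{n-1})$; the resulting average load $\Delta(n):=\delta_n/(\delta_1\cdots\delta_{n-1})$ stays bounded under log-concavity, so the degree is bounded. And because every vertex $v$ now sits inside a genuine copy $\tau_n(v)\cong T_n$ for each $n$, the lower bound $|\ball Tvr|\ge|T_n|\gtrsim g(r/4)$ is automatic---no stubs needed. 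The technical weight then shifts entirely to the upper bound, governed by $\Gamma:=\sup_{m\ge n}\lceil\Delta(m)/\Delta(n)\rceil$; establishing $\Gamma<\infty$ from log-concavity is where the real work lies (and requires a somewhat delicate argument, since one only gets $\delta_{n+1}\le(\delta_n+2)^2$ rather than $\delta_{n+1}\le\delta_n^2$ after rounding).
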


This is subsequently generalized to

\begin{main} \label{unimain}
    If \gFun\ is super-additive and (eventually) log-concave, then there exists a unimodular random rooted tree $(\mathcal T,\omega)$ of uniform growth $g$. 
\end{main}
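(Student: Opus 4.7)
The plan is to produce $(\T, \omega)$ as a Benjamini-Schramm (local-weak) limit of the deterministic tree supplied by \cref{main}, under uniform rooting along an exhaustion by finite balls. First, apply \cref{main} to obtain a deterministic tree $T$ of uniform growth $g$; equip $T$ with a vertex marking $\omega$ that records whatever hierarchical data the construction of $T$ relies upon (possibly a trivial marking if the combinatorial type of $T$ already determines the relevant local structure). Because the construction underlying \cref{main} is finitary, $T$ has uniformly bounded degree and only finitely many local isomorphism types of marked balls of any fixed radius, so we work in a compact Polish space of rooted marked graphs in the local-weak topology.

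Fix a reference vertex $v_0 \in V(T)$; for each $n \in \NN$, let $B_n := \ball{T}{v_0}{n}$ inherit the marks of $T$, and let $\rho_n$ be a uniformly random vertex of $V(B_n)$. Denote by $\mu_n$ the law of the finite rooted marked tree $(B_n, \omega|_{B_n}, \rho_n)$. Each $\mu_n$ is the uniform rooting of a finite marked graph, hence is unimodular; compactness yields a subsequential local-weak limit $\mu$, and since unimodularity is preserved under such limits, $\mu$ is the law of a unimodular random rooted marked tree $(\T, \omega, \rho)$. The support of $\mu$ consists of trees, since the tree property on any bounded-radius ball is a local condition.

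To verify uniform growth, I use the fact that the bounds $C_1 g(r/4) \le |\ball{T}{v}{r}| \le C_2 g(4r)$ from \cref{main} hold at \emph{every} $v \in V(T)$ with the same constants. For any $(\T, \omega, \rho)$ in the support of $\mu$ there is a sequence of vertices $v_n \in V(T)$ such that $(T, v_n) \to (\T, \rho)$ locally, and for any $w \in V(\T)$ and any radius $r$ we may pick $R \ge r + d_{\T}(\rho, w)$ and $n$ large enough that $\ball{T}{v_n}{R}$ is marked-isomorphic to $\ball{\T}{\rho}{R}$; under this isomorphism $w$ corresponds to some $w_n \in V(T)$, and $|\ball{\T}{w}{r}| = |\ball{T}{w_n}{r}|$. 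The uniform bounds on $V(T)$ therefore transfer to every vertex of every tree in the support of $\mu$, so $(\T, \omega)$ almost surely has uniform growth $g$.

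The main obstacle is to ensure that the explicit construction underlying \cref{main} supports a marking $\omega$ taking values in a finite (or at least compact) alphabet on bounded-radius balls, so that the local-weak topology genuinely provides the compactness invoked above and so that the growth estimate on $T$ really is an estimate on local data that passes to local-weak limits. Given the explicit recursive nature of the construction of $T$ in \cref{main}, this is a routine bookkeeping step rather than a genuine difficulty, and once it is in place the Benjamini-Schramm limit above delivers the desired unimodular random rooted tree.
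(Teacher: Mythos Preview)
Your overall plan---take uniformly rooted finite approximations of the deterministic tree $T$ from \cref{main} and pass to a Benjamini--Schramm limit---is exactly the paper's strategy, and the standard facts you invoke (bounded degree gives compactness, soficity implies unimodularity) are correct. The gap is in the growth transfer. You assert that for every $(\T,\rho)$ in the support of $\mu$ there exist $v_n\in V(T)$ with $(T,v_n)\to(\T,\rho)$ locally, and then read off the growth bounds from those of $T$. But $\mu$ is a limit of the laws of $(B_n,\rho_n)$, not of $(T,\rho_n)$: a point in the support is approximated by $(B_n,v_n)$, and when $v_n$ lies near the boundary of $B_n$ the $r$-ball in $B_n$ is a proper truncation of the $r$-ball in $T$. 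For trees of exponential growth a positive fraction of the vertices of $B_n$ are within distance $r$ of the boundary for every fixed $r$, so these truncated neighbourhoods can persist in the limit (think of the canopy tree arising from balls in a regular tree). Your lower bound argument therefore does not go through as stated, and the vague ``marking'' device does not address this: marks restricted to $B_n$ carry no information about what was cut off.

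The paper sidesteps this by exhausting not with balls in a fixed $T$ but with the finite trees $T_n$ of \cref{constr:T_n} themselves. The key structural fact is that \emph{every} ball of radius $r\le 2^{n-1}$ in $T_n$ is isomorphic to a ball in \emph{some} limit tree from \cref{constr:T} (for a suitable inclusion chain, generally different from the one producing your fixed $T$), and \cref{thm:main} provides uniform growth bounds for all such limit trees with constants depending only on the sequence $\delta_1,\delta_2,\ldots$. Thus the growth bounds pass to every ball in every $T_n$ and hence to every ball appearing in the limit. Your argument can be repaired by switching to this exhaustion (and if $T$ is the centric limit with global center $x^*$, the $T_n$ are literally the balls $B_T(x^*,2^n-1)$, so this is close to what you wrote), but the justification must go through \cref{thm:main} applied to a varying family of limit trees rather than through local convergence to your single fixed $T$.
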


A sufficient introduction to random rooted trees and unimodular graphs is given in \cref{sec:unimodular} (or see \cite{aldouslyons}).

Super-additivity and log-concavity can be understood as formalizations~of~the~intuitive constraints that a locally finite graph grows \enquote{at least linearly} and \enquote{at~most exponentially}.
At the same time, these formulation preventing certain patho\-logies, such as too strong oscillations in the growth behavior.

We further prove a structure theorem (\cref{structure}) which provide more detail on the structure of the obtained unimodular trees $(\mathcal T,\omega)$ and how it depends on the prescribed growth rate. Its precise formulation requires some preparation, but the core message is as follows:
\begin{myenumerate}
    \item for uniform growth \emph{above} $r^{\alpha \log\log r}$ with $\alpha>1$ the constructed unimodular tree is (\as) reminiscent of the classic~canopy tree.
    \item for uniform growth \emph{below} $r^{\log\log r}$ the constructed unimodular tree has probability zero to coincide with any particular deterministic tree, \ie\ every countable set of rooted trees is attained with probability zero. 
\end{myenumerate}
Except for almost linear growth, we find that $(\mathcal T,\omega)$ is \as\ 1-ended  (that is, any two~infinite rays are eventually identical).

\subsection{General notes on notation}
All graphs in this article are simple and poten\-tially infinite.
For a graph $G$ we write $V(G)$ for its vertex set and $E(G)$ for its edge set. For $v,w\in V(G)$ we write $vw\in E(G)$ for a connecting edge and $d_G(v,w)$ for their graph-theoretic distance in $G$.

For a graph of uniform growth as in \eqref{eq:uniform_growth} one generally distinguishes
\begin{itemize}
    \item \emph{uniform polynomial growth} if $g(r)=\exp(O(\log r))$,
    \item \emph{uniform exponential growth} if $g(r)=\exp(\Omega(r))$,
    \item \emph{uniform intermediate growth} if $g(r)=\exp(o(r))$ and $g(r)=\exp(\omega(\log r))$,
\end{itemize}
where we used the Landau symbols $O,\Omega, o,\omega$ as usual.




\subsection{Overview}


\cref{sec:construction} provides the main construction: a recursively defined~sequence $T_n$ of finite trees as well as its limit tree $T$, which we later show to be~of~uniform growth. 
The growth of $T$ can be finely controlled using a sequence of parame\-ters $\delta_1,\delta_2,\delta_3,...\in\NN$.
We provide intuition for the connection between this~sequence and the growth of $T$, supplemented with several examples.
We also recall the construction from \cite{benjamini2011recurrence} for turning trees into triangulations of and Riemannian metric on $\RR^d,d\ge 2$ that inherit the growth behavior of the underlying tree.

In \cref{sec:main} we explain how to choose the $\delta_n$ to aim for a particular growth prescribed by some super-additive function \gFun.
We then~show~that,\nlspace subject to some technical conditions, $T$ is indeed~of~uniform growth $g$ (\cref{thm:main}):
$$C_1\cdot g(r/4)\le|\ball Gvr|\le C_2\cdot g(4r).$$
We conclude this section proving that the technical conditions are always satisfied if $g$ is (eventually) log-concave (\cref{res:conditions_in_g}).
This proves \cref{main}.


In \cref{sec:unimodular} we recall the necessary terminology for unimodular random rooted graphs and Benjamini-Schramm limits. 
We investigate convergence (in the Benja\-mini-Schramm sense) of the sequence $T_n$ and find that its limit is indeed a unimo\-dular tree of uniform growth, proving \cref{unimain}.
We probe the structure of these limits in \cref{structure}.
We then construct a unimodular tree of uniform intermediate growth that is \as\ a unique deterministic tree (with randomly chosen root).

\section{The construction}
\label{sec:construction}


\par\medskip



For each integer sequence $\delta_1,\delta_2,\delta_3,...\in\NN$ with $\delta_n\ge 1$ we construct a tree~$T=T(\delta_1,\delta_2,...)$.
The choice of sequence will determine the growth rate of $T$.
%
%
%
The tree $T$ is constructed as a limit object of the following sequence of trees $T_n$ where $n\ge 0$:



\begin{construction}\label{constr:T_n}
The trees $T_n$ are defined recursively.
In each tree we distinguish two special types of vertices: a \emph{center}, and a set of so-called \emph{apocentric vertices} (or outer\-most or peripheral vertices), both will be defined alongside the trees:
%
\begin{myenumerate}
    \item 
    $T_0$ is the tree consisting of a single vertex. This vertex is both the center of $T_0$ as well as its only apocentric vertex.
    
    \item 
    The tree $T_n$ is built from $\delta_n+1$ disjoint copies $\tau_0, \tau_1,...,\tau_{\delta_n}$ of $T_{n-1}$ that we join into a single tree by adding the following edges:
    %
    %
    for each $i\in\{1,...,\delta_n\}$ add an edge between the center of $\tau_i$ and some apocentric vertex of $\tau_0$.
    There is a choice in selecting these apocentric vertices of $\tau_0$ (and note that we can choose the same apocentric vertex more than once), but we shall require that these adjacencies are distributed in a maximally uniform way among the apocentric vertices of $\tau_0$ (we postpone a rigorous definition of this until we introduced suitable notation; see \cref{rem:max_uniform}).
    
    It remains to define the distinguished vertices of $T_n$: the center of~$T_n$~is the center of $\tau_0$; the apocentric vertices of $T_n$ are the apocentric vertices~of $\tau_1,...,$ $\tau_{\delta_n}$.

\end{myenumerate}
%
%
See \cref{fig:construction} for an illustration of this recursive definition.
    
\begin{figure}[!h]
    \centering
    \includegraphics[width=0.85\textwidth]{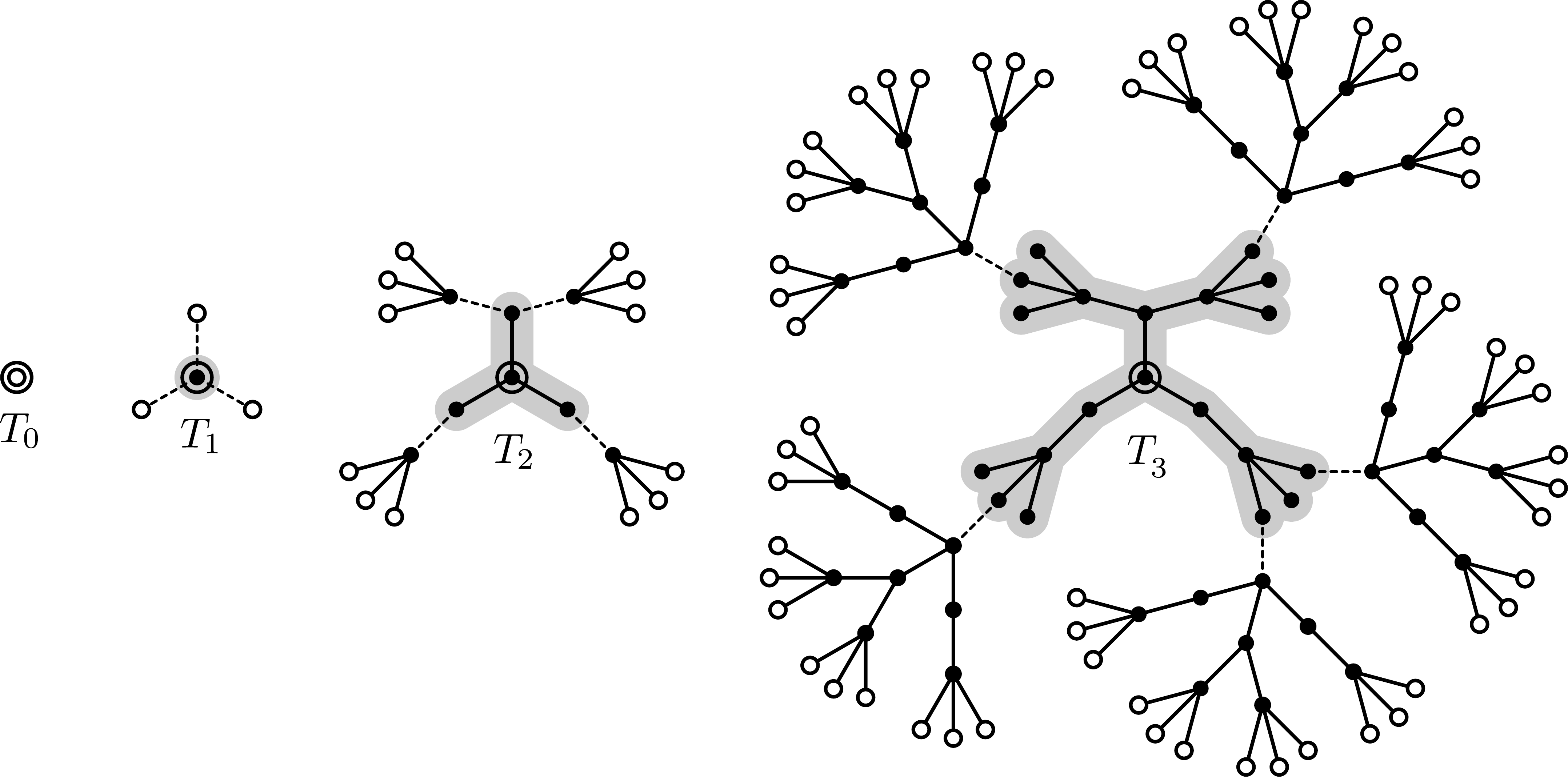}
    \caption{Illustration of the first four trees $T_0,...,T_3$ defined using the sequence $\delta_n:=n+2$. The ringed vertex is the center, and the white vertices are the apocentric vertices in the respective tree. The highlighted subgraph is the central copy $\tau_0$ in $T_n$. The dashed lines are the new edges added to connect the copies to form a single tree.}
    \label{fig:construction}
\end{figure}
\end{construction}

The following three properties follow immediately from the recursive definition:

\begin{observation}\label{res:T_n_properties} \quad
\begin{myenumerate}
    \item $T_n$ has exactly $(\delta_1+1)\cdots(\delta_n+1)$ vertices.
    \item $T_n$ has exactly $\delta_1\cdots\delta_n$ apocentric vertices, all of which are leaves of the tree (but not all leaves are necessarily apocentric).
    \item the distance from the center of $T_n$ to any of its apocentric vertices is $2^n-1$.
\end{myenumerate}
\end{observation}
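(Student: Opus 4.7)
All three claims follow by a single induction on $n$, and the main obstacle is simply making sure the distinguished vertices behave as the induction hypothesis dictates; there is no real difficulty beyond bookkeeping. The base case $n=0$ is immediate from \cref{constr:T_n}\ls{ii}: $T_0$ has one vertex, which is both the center and the unique apocentric vertex, and the empty products in \ls{i} and \ls{ii} both equal $1$, while $2^0-1=0$ matches the degenerate distance from the center to itself.

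For the inductive step, assume the three properties hold for $T_{n-1}$ and recall that $T_n$ is assembled from $\delta_n+1$ disjoint copies $\tau_0,\dots,\tau_{\delta_n}$ of $T_{n-1}$ together with $\delta_n$ new edges. For \ls{i}, simply multiply: by the induction hypothesis each $\tau_i$ has $(\delta_1+1)\cdots(\delta_{n-1}+1)$ vertices, and none are identified in the construction, giving $|V(T_n)|=(\delta_n+1)(\delta_1+1)\cdots(\delta_{n-1}+1)$. For \ls{ii}, by definition the apocentric vertices of $T_n$ are exactly the apocentric vertices of $\tau_1,\dots,\tau_{\delta_n}$, hence their number is $\delta_n\cdot\delta_1\cdots\delta_{n-1}$. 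To see that they are leaves in $T_n$, note that by induction they are leaves within their host $\tau_i$, and the new edges attach endpoints only at centers of $\tau_1,\dots,\tau_{\delta_n}$ and at apocentric vertices of $\tau_0$; since an apocentric vertex of $\tau_i$ for $i\ge 1$ is never selected as an endpoint of a new edge (it is neither a center of some $\tau_i$, nor an apocentric vertex of $\tau_0$), its degree in $T_n$ equals its degree in $\tau_i$, which is $1$.

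For \ls{iii}, let $v$ be any apocentric vertex of $T_n$, so $v$ lies in some $\tau_i$ with $i\ge 1$ and is apocentric there. The unique path in the tree $T_n$ from the center of $T_n$ (which is the center of $\tau_0$) to $v$ must cross the unique new edge $e_i$ between $\tau_0$ and $\tau_i$, because $T_n$ is a tree and $e_i$ is a bridge separating $\tau_i$ from the rest. This path decomposes as: a path inside $\tau_0$ from the center of $\tau_0$ to the endpoint of $e_i$ in $\tau_0$ (which is an apocentric vertex of $\tau_0$, of length $2^{n-1}-1$ by induction), then the edge $e_i$, then a path inside $\tau_i$ from the center of $\tau_i$ to $v$ (of length $2^{n-1}-1$ by induction). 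The total length is $2(2^{n-1}-1)+1 = 2^n-1$, independent of the choice of $v$, completing the induction.
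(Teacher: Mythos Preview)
Your proof is correct and is exactly the straightforward induction one would write out; the paper itself gives no proof, stating only that the three properties ``follow immediately from the recursive definition.'' One tiny wrinkle in your argument for \ls{ii}: at $n=1$ the apocentric vertex of $\tau_i$ \emph{is} the center of $\tau_i$ (since $\tau_i\cong T_0$ is a single vertex), so it \emph{is} an endpoint of a new edge, contrary to what you wrote; but then its degree in $T_1$ is $0+1=1$, so it is still a leaf and the claim holds. For $n\ge 2$ your reasoning is fine as written, since by \ls{iii} the center and the apocentric vertices of $T_{n-1}$ are then distinct.
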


One way to construct a \enquote{limit} of the $T_n$ is the following:


\begin{construction}\label{constr:T}
For each $n\ge 1$ identify $T_n$ with one of its copies $\tau_0,\tau_1,...,\tau_{\delta_{n+1}}$ in $T_{n+1}$.
In this way we obtain an inclusion chain $T_0\subset T_1\subset T_2\subset\cdots$ and the~union $T=T(\delta_1,\delta_2,...):=\bigcup_{n\ge 0} T_n$ is an infinite tree.
\end{construction}


%
\iftrue 
For later use we distinguish three natural~types of limits:
%
\begin{itemize}
    \item the \emph{centric limit} always identifies $T_n$ with the \enquote{central copy} $\tau_0$ in $T_{n+1}$.\nlspace
    This limit comes with a designated vertex $x^*\in V(T_0)\subset V(T)$, the \emph{global center}.
    \item \emph{apocentric limit}s always identify $T_n$ with an \enquote{apocentric copy} $\tau_i$ in $T_{n+1}$. We note that in this case the limit tree $T$ resembles the classicao canopy tree, in particular is \emph{one-ended}.
    \item \emph{mixed limits} make both central and apocentric identifications an infinite number of times each.
\end{itemize}
In \cref{sec:unimodular} we discuss a different and arguably more canonical way to take a limit (the Benjamini-Schramm limit) that avoids arbitrary identifications.

\else

For later use we distinguish three natural types of inclusion chains:
%
\begin{itemize}
    \item \emph{centric} inclusion chains identify $T_n$ with the \enquote{central copy} $\tau_0$ in $T_{n+1}$.
    \item \emph{apocentric} inclusion chains identify $T_n$ with an \enquote{apocentric copy} $\tau_i$ in $T_{n+1}$.
    \item \emph{mixed} inclusion chains make both central and apocentric identifications infinitely often.
\end{itemize}
In \cref{sec:unimodular} we discuss a different and arguably more canonical way to make these identification via Benjamini-Schramm limits.

\fi

Our core claim is now that for \enquote{most} sequences $\delta_1,\delta_2,\delta_3,...\in\NN$ and independent of the type of the limit, the tree $T$ has a uniform volume growth of some sort, and that with a careful choice of the sequence we can model a wide range~of~growth~behaviors, including polynomial, intermediate and exponential.


The following example computation gives us
a first idea of
the connection 
between the sequence $\delta_1,\delta_2,\delta_3,...\in\NN$ and the growth rate of $T$.
For this, let $T$ be the centric limit with global center $x^*\in V(T)$.
By \cref{res:T_n_properties}~\itm3 the ball of radius $r=2^n-1$ in $T$, centered at $x^*$, is exactly $T_n\subset T$.
By \cref{res:T_n_properties} \itm1 it follows
%
%
%
%
%
%
\begin{equation}
\label{eq:ball_volume_in_T}
|\ball T{x^*\!}r| = |T_n| = (\delta_1+1)\cdots(\delta_n+1).    
\end{equation}
Thus, if we aim for, say, $\ball T{x^*\!}r\approx g(r)$ with a given growth function $g\:\NN_0\to\NN_0$, then \eqref{eq:ball_volume_in_T} suggests to use a sequence $\delta_1,\delta_2,\delta_3,...\in\NN$ for which 
%
$(\delta_1+1)\cdots(\delta_n+1)$ approximates $g(2^n-1)$.
In practice it turns out more convenient to approximate $g(2^n)$ (the computations are nicer and we still can prove uniform growth), and so we are lead to
%
\begin{equation} \label{eq:suggestions}
    \delta_n + 1\approx \frac{g(2^n)}{g(2^{n-1})},
\end{equation}
where we necessarily introduce an error when rounding the right side to an integer.
%
%
%
%
To establish uniform growth with a prescribed growth rate $g$ it remains to prove
\begin{itemize}
    \item the error introduced by rounding the right side of \eqref{eq:suggestions} is manageable.
    \item an estimation close to \eqref{eq:ball_volume_in_T} holds for radii $r$ that are not of the form $2^n-1$.
    \item an estimation close to \eqref{eq:ball_volume_in_T} holds for general limit trees and around vertices other than a designated \enquote{global center}.
\end{itemize}
These points are addressed in the next section.

The remainder of this section is used to introduce helpful notation, to clarify the phrase \enquote{maximally uniform distribution of adjacencies} used in \cref{constr:T_n}, to provide examples, and to comment on the construction of triangulations of and Riemannian metric on $\RR^d,d\ge 2$ with prescribed growth.


\begin{notation}\label{not:copy}

By \cref{constr:T_n} \itm2 for every $n\in \mathbb{N}$, $T_n$ (and each tree isomorphic to $T_n$) comes with a canonical decomposition into copies of $T_{n-1}$.
Recursively we obtain a canonical decomposition of $T_n$ into copies of $T_m$ for each $m\le n$ (see \cref{fig:copies_of_T1}). We shall use the notation $\tau\prec_m T_n$ to indicate that $\tau$ is such a canonical copy of $T_m$, or $\tau \prec T_n$ if $m$ is not relevant.

A copy $\tau\prec T_n$ is called \emph{central} if it contains the center of $T_n$; it is called~\emph{apocentric} if it shares apocentric vertices with $T_n$ (and one can easily show that then \emph{all}~apocentric vertices of $\tau$ are apocentric in $T_n$).

Finally, note that by way of construction, a limit tree $T$ in the sense of \cref{constr:T} has such a canonical decomposition too, and the notion $\tau\prec T$ therefore makes sense as well.
\end{notation}

\begin{figure}[h!]
    \centering
    \includegraphics[width=0.47\textwidth]{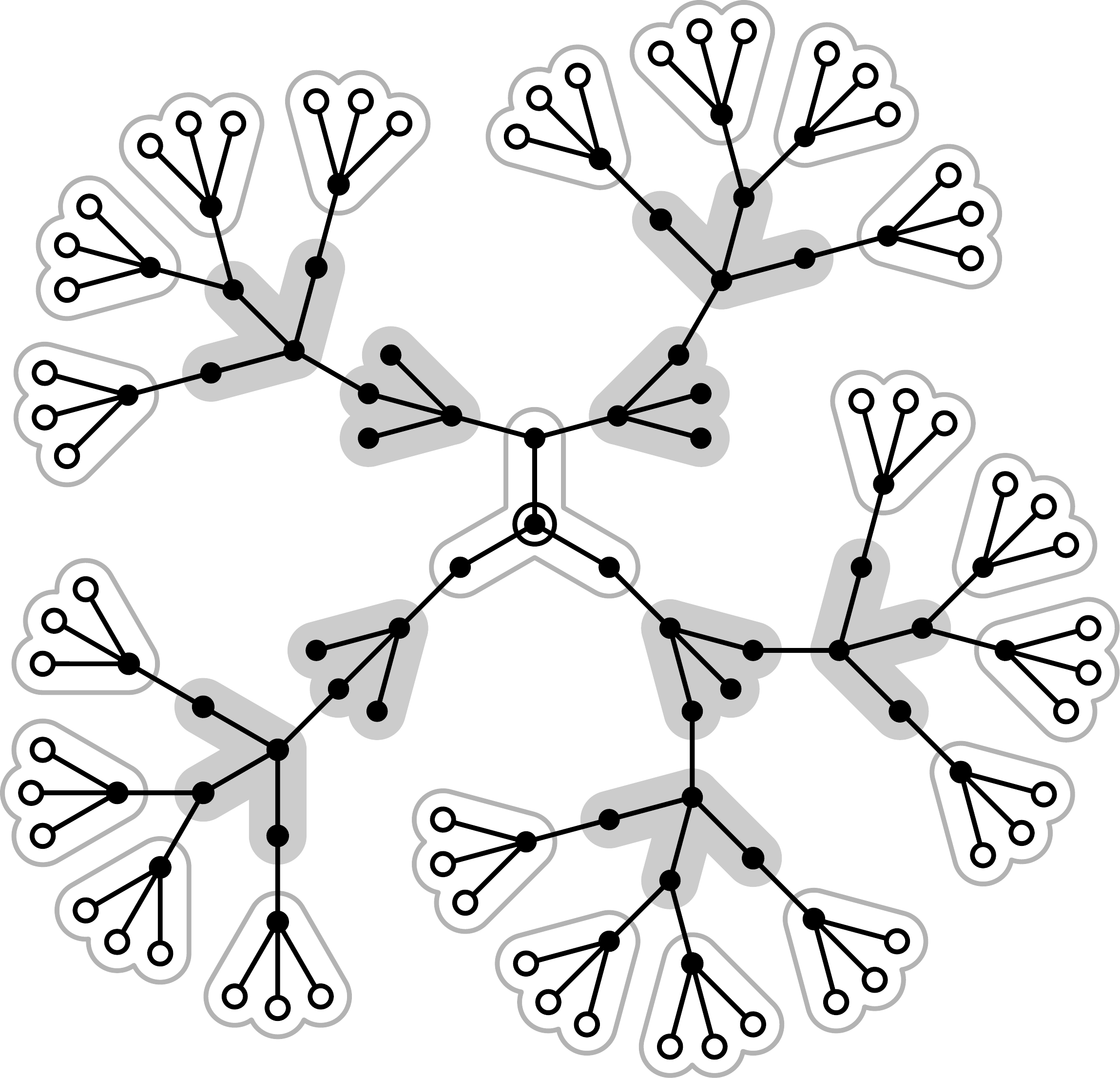}
    \caption{All canonical copies $\tau\prec_1 T_3$ are highlighted. The central~and apocentric copies are highlighted in white, the others in gray. 
    }
    \label{fig:copies_of_T1}
\end{figure}



With this notation in place we can clarify our use of \enquote{maximal uniform~distribution of adjacencies} in \cref{constr:T_n} \itm2.

\begin{remark}\label{rem:max_uniform}


Let $\tau_0\prec_{n-1} T_{n}$ be the central copy.
Then there are exactly $\delta_n$ \enquote{outwards edges} connecting $\tau_0$ to the apocentric copies $\tau_1,...,\tau_{\delta_n}\prec_{n-1} T_n$.
%
\enquote{Maximal uniform distribution} means that every apocentric copy $\tau\prec_m \tau_0$ (where $m\le n-1$) intersects the expected number of these edges (up to rounding). More precisely, if $E_\tau$ is the number of the \enquote{outwards edges} that have an end in $\tau$, then
\begin{equation}\label{eq:max_uniform}
\Big\lfloor \, \frac{\delta_1\cdots\delta_m}{\delta_1\cdots\delta_{n-1}} \cdot \delta_n \Big\rfloor
\le
E_\tau
\le 
\Big\lceil \, \frac{\delta_1\cdots\delta_m}{\delta_1\cdots\delta_{n-1}} \cdot \delta_n \Big\rceil,
\end{equation}
where $(\delta_1\cdots\delta_m)/(\delta\cdots\delta_{n-1})$ is exactly the fraction of apocentric vertices of $\tau_0$ that are also in $\tau$ (\cf\ \cref{res:T_n_properties} \itm2).

It is not hard to see that in each step of \cref{constr:T_n} this distribution can be achieved by adding the \enquote{outwards edges} one by one. 
The reader can verify that the steps shown in \cref{fig:construction} are in accordance with~a~max\-imally uniform distribution.



\end{remark}

We provide three examples demonstrating the versatility~of~\cref{constr:T}.

\begin{example}[Polynomial growth]\label{ex:polynomial}
If we aim for polynomial growth $g(r)=r^\alpha,\alpha\in\NN$ then the heuristics \eqref{eq:suggestions} suggests to use a constant sequence $\delta_n:=2^\alpha-1$.

In fact, the corresponding trees $T_n$ embed nicely into the $\alpha$-th power%
\footnote{Recall, the $\alpha$-th power of $G$ is a graph $G^\alpha$ with vertex set $V(G)$ and an edge between any to vertices whose distance in $G$ is at most $\alpha$.}
of~the~$\alpha$-dimensional lattice graph (shown in \cref{fig:2_lattice_construction} for $\alpha=2$). 
\begin{figure}[h!]
    \centering
    \includegraphics[width=0.92\textwidth]{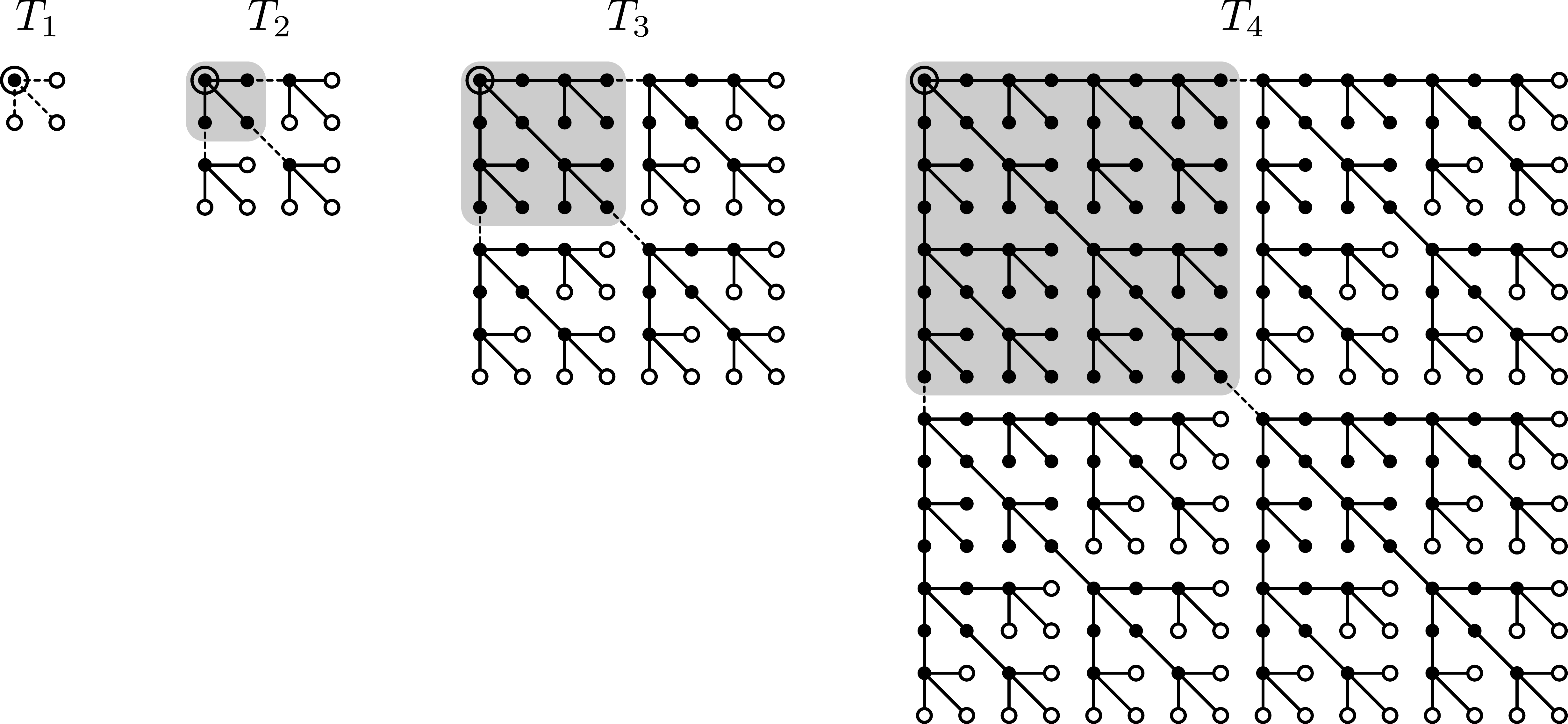}
    \caption{
        Embedding of the tree $T(3,3,...)$ into the square of the 2-dimensional lattice graph.
    }
    \label{fig:2_lattice_construction}
\end{figure}

More generally, for any constant sequence $\delta_n:=c$ we expect to find polynomial volume growth, potentially with a non-integer exponent $\log(c+1)$.
\end{example}

\begin{example}[Exponential growth]\label{ex:exponential}
For the sequence $\delta_n:=d^{\kern0.15ex 2^{n-1}}\!,d\in\NN$ the~centric limit $T$ is the $d$-ary tree,
in particular, of exponential volume growth (see~\cref{fig:binary_tree} for the case $d=2$, \ie\ the binary tree).
In fact, using \eqref{eq:ball_volume_in_T} for $r=2^n$ (where $x^*\in V(T)$ is the global center) we find
 $$|\ball T{x^*\!}{r-1}| = (\delta_1+1)\cdots(\delta_n+1)=\prod_{k=1}^n \Big(d^{\kern0.15ex 2^{k-1}}\!\!+1\Big) = \sum_{i=0}^{\mathclap{2^n-1}} d^{\kern0.15ex i} = \frac{d^{\kern0.15ex 2^n}-1}{d-1} = \frac{d^{\kern0.15ex r}-1}{d-1}.$$
\begin{figure}[h!]
    \centering
    \includegraphics[width=0.85\textwidth]{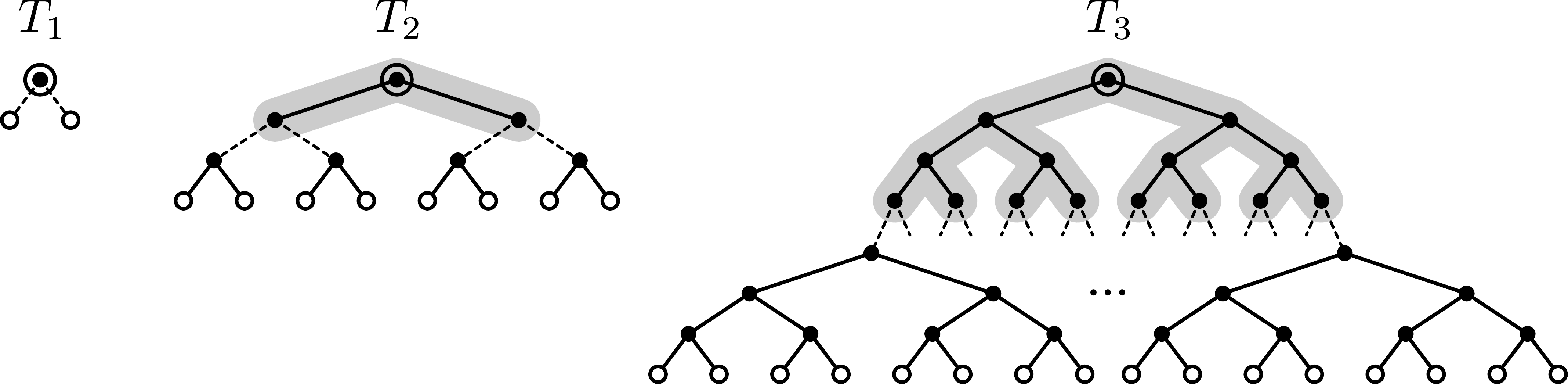}
    \caption{
        The binary tree as constructed from \cref{constr:T_n} using the doubly exponential sequence $\delta_n=2^{2^{n-1}}$.
    }
    \label{fig:binary_tree}
\end{figure}
\end{example}

Extrapolating from \cref{ex:polynomial} and \cref{ex:exponential}, it seems reasonable that unbounded sequences $\delta_1,\delta_2,\delta_3,...$ with a growth sufficiently below doubly exponential result in intermediate volume growth.

\begin{example}[Intermediate growth]\label{ex:loglogr}
For $\delta_n:=(n+3)^\alpha -1$, $\alpha\in\NN$ we can compute this explicitly (see \cref{fig:construction} for the case $\alpha=1$).
If $T$ is the centric limit with global center $x^*\in V(T)$ and $r=2^n$, then:
\begin{align*}
|\ball T{x^*\!}{r-1}|
= (\delta_1+1)\cdots(\delta_n+1) 
&= (\tfrac16 (n+3)!)^\alpha 
 \sim (n! n^3)^\alpha
 \sim (n^n e^{-n} n^{7/2})^\alpha 
\\ &= r^{\alpha \log\log r} r^{-\alpha/\ln 2}\, (\log r)^{7\alpha/2}
\end{align*}
We therefore expect this choice of sequence to lead to a tree of uniform intermediate volume growth.
Trees constructed from $\delta_n\sim n^\alpha$ present an interes\-ting~boundary case in \cref{sec:unimodular} when we discuss unimodular random trees (see also \cref{structure}).
\end{example}


We close this section with a brief discussion of how to turn these trees into planar triangulations and Riemannian manifolds (following the construction in \cite{benjamini2011recurrence}):

\begin{construction}\label{constr:triangulations}
Let $T$ be a tree of maximum degree $\Delta$.
We fix a triangulated sphere with at least $\Delta$ pairwise disjoint triangles. We take copies of this sphere, one for each vertex of $T$, and identify two spheres along a triangle when the associated vertices are adjacent in $T$.
This yields a planar triangulation.
If $T$ is of uniform growth $g$, so is this triangulation.
\end{construction}

Using trees with a variety of growth rates, we can build planar triangulations with the same wide range of growth behaviors.

It turns out, we can say more: previously known triangulations of polynomial growth (such as in \cite{benjamini2011recurrence}) are planar, but are not necessarily triangulations of the~plane, \ie\ they are not necessarily homeomorphic to $\RR^2$. 
For this to be the case, the tree $T$ needs to be \emph{one-ended}.
In fact, this is the case for the tree~$T$~obtained~as~the~apo\-centric limit via \cref{constr:T}.

It is straightforward to modify \cref{constr:triangulations} to yield triangulations of higher-dimensional Euclidean spaces or even Riemannian metrics on $\RR^d$ exhibiting a particular volume growth.
For the triangulation, start from a simplicial $d$-sphere with at least $\Delta$ pairwise disjoint simplices and proceed by gluing them along simplices as in \cref{constr:triangulations} to obtain a triangulation of $\RR^d$.
For the Riemannian metric, replace each vertex of $T$ by a unit sphere and connect the spheres smoothly along the edges of $T$ using sufficiently thin cylinders of bounded length. The result is diffeomorphic to $\RR^d$ but with a metric that inherits its growth from $T$.




\section{Uniform volume growth}
\label{sec:main}

We fix an increasing function \gFun\ that shall serve as our target growth rate. 
The goal of this section is to construct an appro\-priate sequence $\delta_1,\delta_2,\delta_3,...\in\NN$ for which we can show that $T:=T(\delta_1,\delta_2,...)$ is of uniform volume growth $g$.\nlspace  
We shall show in particular that this is feasible if $g$ is super-additive and log-concave, thus proving \cref{main}.

Recall that the uniform growth rate of a graph must be at least linear (like in~a path graph) and at most exponential (like in a regular tree).
It would therefore~be reasonable to impose these constraints on the target growth rate $g$ right away.\nlspace
However, we shall not do this: these constraints will also emerge naturally during our journey towards the main theorem of this section.
In fact, we shall uncover even more precise constraints expressed in terms of the sequence $\delta_1,\delta_2,\delta_3,...\in\NN$ (see \cref{rem:c,rem:dbar}) and we discuss how they relate to the natural constraints of $g$ growing at least linear and at most exponential.

Following our motivation from the last section we firstly try to approximate $g(2^n)$ by $(\delta_1+1)\cdots(\delta_n+1)$.

\begin{lemma}\label{res:d_i}
There exists a sequence $\delta_1,\delta_2,\delta_3,...\in\NN$ so that
\begin{equation}\label{eq:d_bounds}
(1-c)\cdot g(2^n)  \;<\; (\delta_1+1)\cdots(\delta_n+1) \;<\; (1+c)\cdot g(2^n),
\end{equation}
for all $n\ge 0$, where $c:=\sup_n g(2^n)/g(2^{n+1}) \le 1$.
\begin{proof}
For this proof we abbreviate $\bar\delta_n:=\delta_n+1$. 
We set $\bar\delta_1 := g(2)$ and recursively 
$$\bar\delta_{n+1}:=\begin{cases}
\lceil g(2^{n+1})/g(2^n)\rceil & \text{if $\bar\delta_1\cdots\bar\delta_{n}\le g(2^n)$}\\
\lfloor g(2^{n+1})/g(2^n)\rfloor & \text{if $\bar\delta_1\cdots\bar\delta_{n}> g(2^n)$}
\end{cases}.
$$

This sequence satisfies \eqref{eq:d_bounds} as we show by induction on $n$: the induction base is clear from the definition of $\bar\delta_1$. Next, if \eqref{eq:d_bounds} holds for $n$, and $\bar\delta_1\cdots\bar\delta_{n}\le g(2^n)$, then
\begin{align*}
\bar\delta_1\cdots \bar\delta_n \cdot\bar\delta_{n+1} &\le g(2^n) \cdot\Big\lceil \frac{g(2^{n+1})}{g(2^n)} \Big\rceil 
< g(2^n)\cdot \Big( \frac{g(2^{n+1})}{g(2^n)} + 1\Big)
\\&= \Big( 1+ \frac{g(2^n)}{g(2^{n+1})} \Big) \cdot g(2^{n+1}) \le ( 1+ c )\cdot  g(2^{n+1}), \\[1ex]
\bar\delta_1\cdots \bar\delta_n \cdot\bar\delta_{n+1} 
&> (1-c)\cdot g(2^n)\cdot \Big\lceil \frac{g(2^{n+1}}{g(2^n)} \Big\rceil 
\ge (1-c)\cdot g(2^n)\cdot \frac{g(2^{n+1})}{g(2^n)}
\\&=(1-c)\cdot g(2^{n+1}).
\end{align*}
An analogous argument applies in the case $\bar\delta_1\cdots\bar\delta_{n}> g(2^n)$.
\end{proof}
\end{lemma}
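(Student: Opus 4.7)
The plan is to build the $\delta_n$ recursively by a self-correcting greedy rule, so that the partial product $P_n := (\delta_1+1)\cdots(\delta_n+1)$ is kept inside the window $((1-c)g(2^n),(1+c)g(2^n))$. Writing $\bar\delta_n := \delta_n+1$, the naive target at step $n+1$ is the ratio $g(2^{n+1})/g(2^n)$, which is in general non-integer, so some rounding is unavoidable; the crux is to pick the rounding direction at each step so that round-off errors from earlier steps are damped rather than compounded.

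For the base case I would pick $\bar\delta_1$ to be an integer close to $g(2)$; since $c \le 1$ gives $g(2^{n+1}) \ge g(2^n)$, the target ratios are all at least $1$, leaving enough room. For the inductive step, given the bound at $n$, I branch on which side of $g(2^n)$ the current product sits: if $P_n \le g(2^n)$ (we are behind) set $\bar\delta_{n+1} := \lceil g(2^{n+1})/g(2^n) \rceil$ to catch up, whereas if $P_n > g(2^n)$ (we are ahead) set $\bar\delta_{n+1} := \lfloor g(2^{n+1})/g(2^n) \rfloor$ so as not to overshoot further.

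The verification of the inductive step is then a routine case analysis using only the estimates $|\lceil x \rceil - x|, |\lfloor x \rfloor - x| < 1$ together with the defining inequality $g(2^n)/g(2^{n+1}) \le c$. In the ``behind'' branch, the upper bound on $P_{n+1}$ exploits $P_n \le g(2^n)$ to absorb the ceiling's extra $+1$ into a term $g(2^n) \le c \cdot g(2^{n+1})$, while the lower bound follows from the inductive hypothesis on $P_n$ combined with $\lceil \cdot \rceil \ge \cdot$. The ``ahead'' branch is completely symmetric, swapping the roles of ceiling and floor and of the two endpoints of the window.

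I do not expect a serious obstacle; the only thing requiring genuine care is the sign bookkeeping — checking that the strict inequalities propagate through both branches, and ensuring that the rounded $\bar\delta_{n+1}$ really do come out as integers at least $2$ so that the sequence is legal as a choice of $\delta_n \ge 1$. The latter is a mild assumption on how fast $g$ grows at dyadic scales, which, in the present lemma, is absorbed into the running hypothesis $c \le 1$ and will be re-used in the subsequent growth estimates.
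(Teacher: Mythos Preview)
Your proposal is correct and essentially identical to the paper's proof: the same self-correcting greedy rule (ceiling when behind, floor when ahead), the same induction, and the same bookkeeping using $\lceil x\rceil < x+1$, $\lfloor x\rfloor > x-1$, and $g(2^n)/g(2^{n+1})\le c$. One small clarification: the lemma as stated does \emph{not} actually guarantee $\bar\delta_{n+1}\ge 2$ (i.e.\ $\delta_{n+1}\ge 1$), and $c\le 1$ alone is not enough for this --- the paper defers that to a separate remark invoking super-additivity of $g$, so you need not (and should not) try to fold it into the present argument.
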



\begin{remark}\label{rem:c}
Recall that \cref{constr:T_n} requires $\delta_n \ge1$ for all $n\ge 1$ in order for $T(\delta_1,\delta_2,...)$ to be well-defined.
The sequence provided by the proof of \cref{res:d_i} does not generally have this property unless further requirements on $g$ are met.
One possible condition is \emph{super-additivity}, which turns out to also formalize the idea of \enquote{$g$ growing at least linearly}.

Super-additivity means $g(r_1+r_2)\ge g(r_1)+ g(r_2)$ and indeed implies $g(2^{n+1}) = g(2^n+2^n)\ge 2g(2^n)$, and hence
%
%
$$\delta_{n+1}+1\ge \Big\lfloor\frac{g(2^{n+1})}{g(2^n)}\Big\rfloor \ge 2.$$
%
This moreover implies $c:=\sup_n g(2^n)/g(2^{n+1}) \le 1/2 <1$ and guarantees that the lower bound in \cref{res:d_i} is not vacuous.
%
\end{remark}


\begin{remark}\label{rem:dbar}
Recall that graphs of uniform growth are necessarily of bounded~degree.
But again, the sequence constructed in the proof of \cref{res:d_i} does not~necessarily produce such a tree unless further requirements on $g$ are met.
We elaborate how these requirements can be interpreted as \enquote{$g$ growing at most exponentially}.
%

The tree $T_{n-1}$ has exactly $\delta_1\cdots\delta_{n-1}$ apocentric vertices. That is, each apocentric vertex of the central copy $\tau_0\prec_{n-1}T_n$ is adjacent, on average,\nlspace to
\begin{equation}\label{eq:Delta}
\Delta(n):= \frac{\delta_n}{\delta_1\cdots\delta_{n-1}}
\end{equation}
apocentric copies $\prec_{n-1} T_n$.
By the maximal uniform distribution of adjacencies (\cf\ \cref{rem:max_uniform}) the maximum degree among the apocentric vertices of $\tau_0$ is exactly $\lceil \Delta(n) \rceil + 1$.
Thus, since $T$ contains $T_n$ for each $n\ge 0$, $T$ is of bounded degree~only if $\bar \Delta:=\sup_n \lceil\Delta(n)\rceil + 1<\infty$.
We can use this to put an upper bound on the growth of the sequence $\delta_1,\delta_2,\delta_3,...$: for $n\ge 2$ and using \eqref{eq:Delta} it holds
%
$$
\frac{\delta_n}{\delta_1\cdots\delta_{n-1}}\le\bar\Delta - 1
\;\implies \;
\delta_n\le \big((\bar \Delta-1)\delta_1\big)^{2^{n-2}}.
$$
Comparing with \cref{ex:exponential} suggests that $g$ grows at most exponentially. 



\end{remark}

It remains to prove the main result of this section: establishing the exact growth rate of $T=T(\delta_1,\delta_2,...)$ in terms of $g$.
As we shall see, bounded degree (as discussed in \cref{rem:dbar}) is not sufficient to prove uniform volume growth. Instead we require an additional criterion that can also be concisely expressed in terms of the function $\Delta$.
In terms of $g$ this can be interpreted as preventing too strong oscillations in the growth of $g$ (\cf\ \cref{res:conditions_in_g}).

In order to state the main result we recollect that \gFun\ is a strictly~in\-creasing super-additive function.
The sequence $\delta_1$,\nlspace $\delta_2$, $\delta_3,...$ $\in\NN$ is chosen to satisfy
$$
(1-c)\cdot g(2^n)  \;<\; (\delta_1+1)\cdots(\delta_n+1) \;<\; (1+c)\cdot g(2^n),
$$
for some $c\le 1/2$ (according to \cref{res:d_i}) and satisfies $\delta_n\ge 1$ by \cref{rem:c}. In particular, $T=T(\delta_1,\delta_2,...)$ exists.
Finally, $\Delta(n)$ is defined as in \eqref{eq:Delta}. The~main~result then reads as follows:

\begin{theorem}\label{thm:main}
Fix $v\in V(T)$ and $r\ge 0$. Then holds
\begin{myenumerate}
\setlength\itemsep{1ex}
\item \, \\ \,
\vspace{-2.325\baselineskip}
\begin{align*}
\mathrlap{
    |\ball Tvr| \,\ge\, (1-c) \cdot g(r/4) 
                \,\ge\, \tfrac12\, g(r/4).
} \hspace{16em}
\end{align*}
%
 
\item If $\bar\Delta:=\smash{\displaystyle\sup_n}\lceil\Delta(n)\rceil +1<\infty$, then $T$ is of maximum degree $\bar\Delta$ and
\begin{align*}
\mathrlap{
    |\ball Tvr| \,\le\,  2\bar\Delta \cdot (1+c)^2 \cdot g(2r)^2 
                \,\le\, \tfrac92\bar\Delta\cdot g(2r)^2.
} \hspace{16em}
\end{align*}

\item
If $\Gamma:= \smash{\displaystyle\sup_{m\ge n}}  \lceil\Delta(m)/\Delta(n) \rceil < \infty$, then $\bar\Delta<\infty$ and
\begin{align*}
\mathrlap{
     |\ball Tvr| \,\le\, 2(1+c) \cdot \big(\Gamma\cdot g(4r) - (\Gamma-1)\cdot g(2r)\big)
} \hspace{16em}
\\ 
\mathrlap{
 \phantom{|\ball Tvr| } 
 \,\le\, 3 \big(\Gamma\cdot g(4r) - (\Gamma-1)\cdot g(2r)\big)
} \hspace{16em}
\\ 
\mathrlap{
 \phantom{|\ball Tvr| } 
 \,\le\, 3\Gamma \cdot g(4r).
} \hspace{16em}
\end{align*}
\end{myenumerate}
In particular, if $\Gamma<\infty$ then $T$ is of uniform volume growth $g$.
\end{theorem}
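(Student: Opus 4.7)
The plan is to exploit the canonical copy structure: every $v\in V(T)$ lies in a nested chain $\tau^{(0)}\prec_0\tau^{(1)}\prec_1\cdots$ of copies, with sizes controlled by \cref{res:d_i} as $(1-c)g(2^n)<|\tau^{(n)}|<(1+c)g(2^n)$. For the lower bound \itm1, I would pick $n$ maximal with $2^{n+1}-2\le r$, so that $\tau^{(n)}$ (of diameter exactly $2^{n+1}-2$) fits inside $\ball T v r$; the maximality forces $2^n>r/4$, yielding $|\ball T v r|\ge|\tau^{(n)}|\ge(1-c)g(2^n)\ge(1-c)g(r/4)$. Small values of $r$ can be checked directly.

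For the degree bound in \itm2, I would label each step $\tau^{(k-1)}\prec_{k-1}\tau^{(k)}$ of the chain as $C$ or $A$ depending on whether $\tau^{(k-1)}$ is the central copy or an apocentric copy of $\tau^{(k)}$. A new edge at $v$ can appear at level $k$ only via one of two disjoint patterns: \textbf{(A)} the first $k-1$ labels are $A$ and the $k$-th is $C$, in which case $v$ is an apocentric vertex of the central copy and gains up to $\lceil\Delta(k)\rceil$ edges to the new apocentric siblings; or \textbf{(B)} the first $k-1$ labels are $C$ and the $k$-th is $A$, in which case $v$ is the center of an apocentric copy and gains exactly one edge. Each pattern can be satisfied for at most one $k$ in a given chain, so $\deg(v)\le\lceil\Delta(k_A)\rceil+1\le\bar\Delta$.

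For the upper bound in \itm2, I would choose $n$ smallest with $2^n\ge r$, so $|\tau^{(n)}|\le(1+c)g(2r)$. The key geometric claim is that $\ball T v r$ is contained in $\tau^{(n)}$ together with the one-hop level-$n$ copies $\tau'\prec_n T$ that share a level-$\ge n{+}1$ edge with $\tau^{(n)}$. I expect this to be the main obstacle: reaching a two-hop level-$n$ copy would typically require at least $2^n+1>r$ edges (one boundary crossing, a traversal across the interior of an intermediate copy, and another crossing), and the delicate case where $\tau^{(n)}$ is apocentric in $\tau^{(n+1)}$ (allowing two-hop siblings to be reached cheaply through the central sibling) is handled by passing the argument to $\tau^{(n+1)}$ instead. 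Once the containment is established, the number of one-hop copies is at most $\bar\Delta\cdot|\tau^{(n)}|$ (at most $\bar\Delta$ boundary edges per vertex of $\tau^{(n)}$), and each contributes at most $(1+c)g(2r)$ vertices, producing $|\ball T v r|\le 2\bar\Delta(1+c)^2 g(2r)^2$.

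For \itm3, the hypothesis $\Gamma<\infty$ directly yields $\bar\Delta\le\Gamma\delta_1+1<\infty$ via $\Delta(n)\le\Gamma\Delta(1)=\Gamma\delta_1$. The sharper bound comes from re-examining the upper bound argument at level $n+1$ (where $|\tau^{(n+1)}|\le(1+c)g(4r)$) and using $\Gamma$ together with the maximally uniform distribution (\cref{rem:max_uniform}) to control the number of level-$n$ sibling sub-copies of $\tau^{(n)}$ inside $\tau^{(n+1)}$ that the ball actually intersects. The factor $\Gamma$ bounds the ratio between the reachable apocentric sub-copies and the full set, and telescoping the contributions of reached versus unreached sub-copies yields the form $2(1+c)(\Gamma g(4r)-(\Gamma-1)g(2r))$. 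Combining \itm1 with \itm3 gives $T$ of uniform growth $g$ with constants $c_1=1/4$ and $c_2=4$, completing the theorem.
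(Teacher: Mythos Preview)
Your outline for \itm1 is fine and matches the paper. The problems are in \itm2 and \itm3.

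\textbf{Part \itm2.} Your containment claim (``$\ball Tvr$ is contained in $\tau^{(n)}$ together with its one-hop level-$n$ neighbours'') is false, and not for the reason you flag. When you leave $\tau_n(v)$ through its \emph{center} $x$ to the outside neighbour $w$, the vertex $w$ is an \emph{apocentric} vertex of $\tau_n(w)$, not its center. Hence from $w$ you can hop in one further step to other level-$n$ copies attached at $w$; if $v$ is close to $x$ these two-hop copies are reached in as few as two steps, with no ``traversal across the interior'' in between. This happens regardless of whether $\tau_n(v)$ is central or apocentric in $\tau_{n+1}(v)$, so your ``delicate case'' is mis-identified; and when $\tau_n(v)$ is central in $\tau_{n+1}(v)$ the two-hop copies through $w$ lie \emph{outside} $\tau_{n+1}(v)$, so ``passing to $\tau^{(n+1)}$'' does not contain them (and would in any case produce $g(4r)^2$, not $g(2r)^2$). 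The paper's fix is simple once seen: include $\tau_n(w)$ on equal footing with $\tau_n(v)$ and count copies adjacent to apocentric vertices of \emph{both}. This is exactly the source of the factor $2$ in $2\bar\Delta|T_n|^2$.

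\textbf{Part \itm3.} Your plan (``control the number of level-$n$ sibling sub-copies of $\tau^{(n)}$ inside $\tau^{(n+1)}$'') misses where $\Gamma$ actually enters. The copies attached to the apocentric vertices of $\tau_n(v)$ are not governed by level $n{+}1$ at all: one must take $m\ge n$ \emph{maximal} with $\tau_n(v)$ apocentric in $\tau_m(v)$. Then $\tau_m(v)$ is central in $\tau_{m+1}(v)$, and the $\delta_{m+1}$ apocentric level-$m$ copies are distributed over the $\delta_1\cdots\delta_m$ apocentric vertices of $\tau_m(v)$; the maximally-uniform rule of \cref{rem:max_uniform} then bounds the number meeting $\tau_n(v)$ by
\[
\Big\lceil \frac{\delta_1\cdots\delta_n}{\delta_1\cdots\delta_m}\cdot\delta_{m+1}\Big\rceil
=\Big\lceil\frac{\Delta(m{+}1)}{\Delta(n{+}1)}\cdot\delta_{n+1}\Big\rceil\le \Gamma\,\delta_{n+1}.
\]
Applying the same bound to $\tau_n(w)$ gives at most $2\Gamma\delta_{n+1}+2$ reachable level-$n$ copies, hence $|\ball Tvr|\le (2\Gamma\delta_{n+1}+2)|T_n|\le 2(1+c)\big(\Gamma g(4r)-(\Gamma-1)g(2r)\big)$. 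Without locating this level $m$ you cannot get a bound linear in $g$.
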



 


Note that the lower bound holds unconditionally. 
The conditions $\bar\Delta<\infty$~and~$\Gamma<\infty$ for the upper bounds are technical, but there are natural~criteria in terms of $g$ that imply both, such as being \emph{log-concave} (see \cref{res:conditions_in_g} below).

\begin{proof}[Proof of \cref{thm:main}]
For a vertex $w\in V(T)$ we use the notation $\tau_n(w)$ to denote the unique copy $\prec_n T$ that contains $w$.
%

To prove \itm1 choose $n\in\NN_0$ with $2^{n+1}\le r\le 2^{n+2}$. 
Each vertex of $\tau_n(v)$ can be reached from $v$ in at most $2(2^n-1)$ steps: 
at most $2^n-1$ steps from $v$ to the~center of $\tau_n(v)$, and at most $2^n-1$ steps from the center to any other vertex of $\tau_n(v)$.\nlspace 
This yields
\begin{align*}
|\ball Tvr|
\ge |\ball Tv{2^{n+1}}|
> |\ball Tv{2(2^n-1)}|
\ge |T_n| 
&
\ge (1-c)\cdot g(2^n) 
\\ &
\ge (1-c)\cdot g(r/4).
\end{align*}
Here and in the following the final form of the bound is obtained by using $c\le 1/2$.


To prove \itm2 choose $n\in\NN$ with $2^{n-1}\le r\le 2^n$.
Let $x$ be the center of $\tau_n(v)$, and, if it exists,
let $w$ be the unique neighbor of $x$ outside of $\tau_n(v)$.
Which other copies $\prec_n T$ are reachable from $v$ within $2^n$ steps? The following list is exhaustive:
%
\begin{itemize}
    \item $\tau_n(v)$ and $\tau_n(w)$, 
    \item a copy $\tau \prec_n T$ adjacent to an apocentric vertex of $\tau_n(v)$ \resp\ $\tau_n(w)$.
\end{itemize}
Both $\tau_n(v)$ and $\tau_n(w)$ have not more than $|T_n|$ apocentric vertices, each of which~is adjacent to at most $\smash{\bar\Delta-1}$ copies $\tau\prec_n T$.
In conclusion, 
at most $2\bar\Delta\cdot|T_n|$ copies of $T_n$ are reachable from $v$ in $2^n$ steps, containing a total of at most $2\bar\Delta\cdot |T_n|^2$ vertices.
We therefore find
\begin{align*}
|\ball Tvr|
\le |\ball Tv{2^n}|
\le 2\bar\Delta\cdot|T_n|^2 
&\le 2\bar\Delta\cdot(1+c)^2\cdot g(2^n)^2 
\\&\le 2\bar\Delta \cdot (1+c)^2 \cdot g(2r)^2.    
\end{align*}

One might suggest a much better estimation for the number of copies $\tau\prec_n T$ reachable through apocentric vertices of $\tau_n(v)$ \resp\ $\tau_n(w)$: namely $\delta_{n+1}$.
However, this is only true if $\tau_n(v)$ is a central copy in $\tau_{n+1}(v)$.
In general, $\tau_n(v)$ can as well be apocentric in $\tau_{n+1}(v)$, or can even be apocentric in $\tau_m(v)$ for some very large $m > n$.
If so, then the number of copies $\prec_n T$ reachable through apocentric vertices of $\tau_n(v)$ is more plausibly related to $\delta_{m+1}$ than $\delta_{n+1}$, which is the reason for the very crude estimation above. 
With a more~careful analysis we can prove the second upper bound.

To show \itm3 let $m \ge n$ be maximal so that $\tau_n(v)$ is apocentric in $\tau_m(v)$.
Such an $m$ might not exist (\eg\ when $T$ is an apocentric limit), in which case the apocentric vertices of $\tau_n(v)$ are leaves in $T$ and no other copies $\prec_n T$ are reachable through them.
If $m$ exists however, then $\tau_m(v)$~is necessarily central in $\tau_{m+1}(v)$. 
In particular, the $\delta_1\cdots \delta_m$ apocentric vertices of $\tau_m(v)$ are adja\-cent to $\delta_{m+1}$ apocentric copies $\prec_{m} \tau_{m+1}(v)$.
By the maximally uniformly distribution of adjacencies we can estimate how many of these copies are reachable through the $\delta_1\cdots\delta_n$ apocentric vertices of $\tau_n(v)$.
Following \cref{rem:max_uniform} this number is at most
\begin{align*}
\Big\lceil \frac{\delta_1\cdots\delta_n}{\delta_1\cdots\delta_m} \cdot \delta_{m+1}  \Big\rceil
&=
\Big\lceil \frac{\delta_{m+1}}{\delta_1\cdots\delta_m} \cdot \Big( \frac{\delta_{n+1}}{\delta_1\cdots\delta_n}\Big)^{-1} \cdot \delta_{n+1} \Big\rceil
\\&=\Big\lceil\frac{\Delta(m+1)}{\Delta(n+1)}\cdot\delta_{n+1}\Big\rceil
\le  \Gamma \cdot\delta_{n+1}.
\end{align*}
%
This yields an improved upper bound on the number of copies $\tau\prec_n T$ adjacent to apo\-centric vertices of $\tau_n(v)$.
The same argument applies to $\tau_n(w)$.
Including $\tau_n(v)$ and $\tau_n(w)$ there are then at most $2\Gamma \cdot\delta_{n+1} + 2$ copies $\prec_n T$ reachable from $v$ in $2^n$ steps, and thus:
\begin{align*}
|\ball Tvr|
\le |\ball Tv{2^n}|
&\le (2\Gamma\cdot\delta_{n+1}+2)|T_n|
\\ &\le 2\big(\Gamma\cdot (\delta_{n+1}+1)|T_n| - (\Gamma-1)\cdot|T_n|\big)
\\ &\le 2\big(\Gamma\cdot |T_{n+1}| - (\Gamma-1)\cdot|T_n|\big)
\\ &\le 2(c+1)\cdot\big(\Gamma\cdot g(2^{n+1}) - (\Gamma-1)\cdot g(2^n)\big).
\\ &\le 2(c+1)\cdot\big(\Gamma\cdot g(4r) - (\Gamma-1)\cdot g(2r)\big).
\end{align*}
%
\end{proof}


It appears non-trivial to actually prescribe a natural growth rate $g$ for which $\Gamma$ diverges, and so we feel confident that \cref{thm:main} applies quite generally.
Yet,\nlspace for the remainder of this section we discuss criteria that are sufficient to imply  $\Gamma<\infty$.

\begin{proposition}\label{res:conditions_in_d}
$\Gamma<\infty$ holds in any of the following cases:
\begin{myenumerate}
    \item
    $\delta_{n+1}\le \delta_n^2$ eventually.
    \item 
    the sequence of $\delta_n$ is bounded.
\end{myenumerate}
\begin{proof}
Note that $\Delta(n+1)=\delta_{n+1}/\delta_n^2 \cdot\Delta(n)$.
Assuming \itm1, $\Delta(n)$ will eventually~be non-increasing and so either $\Gamma\le1$ or the value $\Gamma$ is attained only on a finite initial segment, thus, is finite.

For part \itm2 observe $\Delta(m)/\Delta(n)=\delta_m / (\delta_n^2\cdot\delta_{n+1}\cdots\delta_{m-1})\le \displaystyle\smash{\max_i \delta_i}$.
\end{proof}
\end{proposition}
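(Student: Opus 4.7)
The approach will hinge on the single identity
$$\frac{\Delta(n+1)}{\Delta(n)} \;=\; \frac{\delta_{n+1}}{\delta_n^2},$$
which follows immediately by expanding $\Delta(k) = \delta_k/(\delta_1\cdots\delta_{k-1})$. With this in hand, both parts reduce to careful bookkeeping.

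For part \ls{i}, the hypothesis $\delta_{n+1} \le \delta_n^2$ from some index $N$ onwards makes the above ratio at most $1$, so $\Delta$ is non-increasing on $\{N, N+1, \ldots\}$. Then $\Delta(m)/\Delta(n) \le 1$ for every pair $m \ge n \ge N$. The remaining pairs split into two finite-maximum cases: either $n \le m < N$ (only finitely many pairs), or $n < N \le m$, in which case $\Delta(m) \le \Delta(N)$ gives $\Delta(m)/\Delta(n) \le \Delta(N)/\Delta(n)$, a quantity that attains its maximum over the finitely many indices $n \in \{1,\dots,N-1\}$. Taking the supremum over all three regimes yields $\Gamma < \infty$.

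For part \ls{ii}, iterating the identity (or expanding the definition directly) gives for $m > n$
$$\frac{\Delta(m)}{\Delta(n)} \;=\; \frac{\delta_m}{\delta_n^2 \, \delta_{n+1}\,\delta_{n+2}\cdots\delta_{m-1}} \;\le\; \delta_m \;\le\; \max_i \delta_i,$$
using only that each $\delta_i \ge 1$ makes the denominator at least $1$. The case $m = n$ is trivial, so $\Gamma$ is bounded by $\max_i \delta_i + 1$ (accounting for the ceiling in the definition of $\Gamma$), which is finite by hypothesis.

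I expect no substantive obstacle to either part; the only minor subtlety is in \ls{i}, where one must remember to handle the finitely many index pairs outside the eventual-monotonicity regime separately. This is painless once one observes that both of the exceptional regimes involve only finitely many initial indices $n$, so that a finite maximum absorbs them.
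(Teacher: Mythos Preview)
Your proof is correct and follows essentially the same approach as the paper: both hinge on the identity $\Delta(n+1)/\Delta(n)=\delta_{n+1}/\delta_n^2$, use eventual monotonicity of $\Delta$ for \itm1, and the explicit formula $\Delta(m)/\Delta(n)=\delta_m/(\delta_n^2\delta_{n+1}\cdots\delta_{m-1})$ for \itm2. Your treatment of \itm1 is slightly more explicit in splitting the index pairs into three regimes, but this is exactly the content of the paper's phrase ``attained only on a finite initial segment.''
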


Condition \itm2 of \cref{res:conditions_in_d} holds, for example, when $g$ is a polynomial (see \cref{ex:polynomial}).






It remains to provide a criterion for $\Gamma<\infty$ in terms of $g$.
Below we show that~$g$ being \emph{log-concave} is sufficient, where log-concave means
$$\alpha\log g(x) + \beta \log g(y) \le \log g(\alpha x + \beta y)$$
for all $\alpha,\beta\in[0,1]$ with $\alpha+\beta=1$.
Intuitively, being log-concave is sufficient~because it prevents both super-exponential growth and strong oscillations in the growth rate. 

\begin{theorem}\label{res:conditions_in_g}
If $g$ is eventually log-concave, then $\Gamma<\infty$.
\begin{proof}
We start from a well-chosen instance of log-concavity: set
$$\alpha=1/3,\quad \beta=2/3, \quad x=2^{n+1},\quad y=2^{n-1}.$$
and verify $\alpha x+\beta y = 2^n$.
Since $g$ is eventually log-concave, for sufficiently large $n$ holds
$$\tfrac13\log g(2^{n+1}) + \tfrac23 \log g(2^{n-1}) \le \log g(2^n).$$
This further rearranges to
$$\log g(2^{n+1}) + 2\log g(2^{n-1}) \le 3\log g(2^n) \;\implies\; \frac{g(2^{n+1})}{g(2^n)} \le \Big(\frac{g(2^n)}{g(2^{n-1})}\Big)^2\!.$$
Recall that roughly $\delta_n\approx g(2^n)/g(2^{n-1})$, and so the right-most inequality resembles $\delta_{n+1}\le \delta_n^2$.
If this were exact then $\Gamma<\infty$ would already follow from \cref{res:conditions_in_d} \itm1.
However, the actual definition of $\delta_n$ from the proof of \cref{res:d_i} only yields
%
\begin{equation}\label{eq:d_d_2}
\delta_{n+1}
\le \Big\lfloor\frac{g(2^{n+1})}{g(2^n)}\Big\rfloor
\le \frac{g(2^{n+1})}{g(2^n)}
\le \Big(\frac{g(2^n)}{g(2^{n-1})}\Big)^2
\le \Big\lceil\frac{g(2^n)}{g(2^{n-1})}\Big\rceil^2
\le (\delta_n+2)^2.
\end{equation}
%

This turns out to be sufficient for proving $\Gamma<\infty$, though the argument becomes more technical.
Define $\gamma\:\NN\to\RR$ with
$$
\gamma(1):=25,\qquad \gamma(\delta):=\prod_{k=0}^{\infty} 
\Big(1+\frac2{\delta^{2^k}}\Big)^{\!2}\!
,\;\;\,\text{whenever $\delta\ge 2$.}
$$
To keep this proof focused, most technicalities surrounding this function have been moved to \cref{sec:gamma_properties}.
This includes a proof of convergence of the infinite product (\cref{res:estimate_gamma}) as well as a proof of the following crucial property $(*)$ of $\gamma$: for~any two integers $\delta_1,\delta_2\ge 1$ with $\delta_1\le(\delta_2+2)^2$ holds $\delta_1\gamma(\delta_1)\le\delta_2^2\gamma(\delta_2)$ (\cref{res:gamma_property_d_d2}).

Lastly, we define $\pot(n):=\gamma(\delta_n)\Delta(n)$, intended to represent the \emph{potential} for $\Delta$ to increase, as we shall see that it is an upper bound on $\Delta(m)$ for \emph{all} $m\ge n$.
This follows from two observations.
Firstly, $\Delta(m)< \pot(m)$ since $\gamma(\delta)>1$. Secondly, $\pot(n)$ is decreasing in $n$: since $\delta_{n+1}\le(\delta_n+2)^2$ by \eqref{eq:d_d_2} (if $n$ is sufficiently large), we can apply property $(*)$ to find
$$\pot(n+1) = \gamma(\delta_{n+1})\Delta(n+1) = \gamma(\delta_{n+1})\frac{\delta_{n+1}}{\delta_n^2}\Delta(n) \overset{\smash{(*)}}\le \gamma(\delta_n)\Delta(n)=\pot(n).$$
To summarize, for all sufficiently large $m\ge n$ holds
\begin{equation*}\label{eq:goal}
\Delta(m)<\pot(m)
\le\pot(n)=\Delta(n)\gamma(\delta_n) \,\implies\, \frac{\Delta(m)}{\Delta(n)}\le\gamma(\delta_n)\le 25,
\end{equation*}
where we used that $\gamma(\delta)$ is decreasing in $\delta$ (\cref{res:gamma_decreasing}) and so attains its maximum at $\gamma(1)=25$. This proves that $\Gamma$ is finite.
\end{proof}
\end{theorem}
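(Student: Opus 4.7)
The plan is to transfer log-concavity of $g$ into a near-quadratic growth bound on the sequence $\delta_n$, and then use a potential argument to uniformly control the ratios $\Delta(m)/\Delta(n)$.

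First I would apply log-concavity at three consecutive dyadic points, using weights chosen so that $2^n$ is a convex combination of $2^{n+1}$ and $2^{n-1}$. The natural choice is $\alpha = 1/3$, $\beta = 2/3$, since $\tfrac13 \cdot 2^{n+1} + \tfrac23 \cdot 2^{n-1} = 2^n$. Eventual log-concavity then yields $\tfrac13 \log g(2^{n+1}) + \tfrac23 \log g(2^{n-1}) \le \log g(2^n)$ for large $n$, which rearranges to $g(2^{n+1})/g(2^n) \le (g(2^n)/g(2^{n-1}))^2$. Since the construction in \cref{res:d_i} defines $\delta_n$ by rounding the ratio $g(2^n)/g(2^{n-1})$ to an integer, this inequality transfers (up to rounding) into a relation of the form $\delta_{n+1} \le (\delta_n + 2)^2$. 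The additive slack comes from the fact that floor and ceiling each perturb the ratio by at most one.

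The main obstacle is precisely this rounding slack: \cref{res:conditions_in_d}\itm1 would close the argument immediately if we had the clean inequality $\delta_{n+1} \le \delta_n^2$, but the $+2$ term prevents a direct application. To bridge the gap I would introduce a correction factor $\gamma \colon \NN \to [1,\infty)$ and consider the potential $\Phi(n) := \gamma(\delta_n)\,\Delta(n)$. The goal is to choose $\gamma$ so that $\Phi$ is non-increasing whenever $\delta_{n+1} \le (\delta_n + 2)^2$; unwinding $\Delta(n+1) = (\delta_{n+1}/\delta_n^2)\,\Delta(n)$, this reduces to the combinatorial property that $\delta_1\,\gamma(\delta_1) \le \delta_2^2\,\gamma(\delta_2)$ whenever $\delta_1 \le (\delta_2+2)^2$. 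A natural candidate is $\gamma(\delta) := \prod_{k \ge 0}(1 + 2/\delta^{2^k})^2$ (with a special value at $\delta = 1$): each factor absorbs one rounding step, and the doubly-exponential decay of $2/\delta^{2^k}$ makes the infinite product converge and ensures $\gamma$ is decreasing in $\delta$.

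Once such a $\gamma$ is in place, combining $\Delta(m) \le \Phi(m)$ with the monotonicity $\Phi(m) \le \Phi(n)$ for $m \ge n$ (sufficiently large) yields $\Delta(m) \le \gamma(\delta_n)\,\Delta(n) \le \gamma(1) \cdot \Delta(n)$, so the supremum defining $\Gamma$ is bounded by the constant $\gamma(1)$ plus a finite contribution from the initial segment on which log-concavity (or the bound $\delta_{n+1} \le (\delta_n+2)^2$) has not yet kicked in. I expect the genuinely technical work to lie in verifying the convergence of the product defining $\gamma$, its monotonicity in $\delta$, and the key inequality $\delta_1\gamma(\delta_1) \le \delta_2^2\gamma(\delta_2)$; these are elementary but fiddly and are the natural candidates to quarantine in a separate appendix.
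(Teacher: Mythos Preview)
Your proposal is correct and matches the paper's proof essentially step for step: the same convex-combination weights $\tfrac13,\tfrac23$ at dyadic points, the same derived bound $\delta_{n+1}\le(\delta_n+2)^2$, the same correction factor $\gamma(\delta)=\prod_{k\ge 0}(1+2/\delta^{2^k})^2$ (with a special value at $\delta=1$), and the same potential $\gamma(\delta_n)\Delta(n)$ shown to be non-increasing via the key inequality $\delta_1\gamma(\delta_1)\le\delta_2^2\gamma(\delta_2)$. The paper likewise defers the convergence, monotonicity, and key-inequality verifications for $\gamma$ to an appendix.
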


As a corollary of \cref{thm:main} and \cref{res:conditions_in_g} we have proven

\begin{theoremX}{\ref{main}}
    If \gFun\ is super-additive and (eventually) log-concave, then there exists a tree $T$ of uniform growth $g$. 
\end{theoremX}

\iftrue 

\section{Unimodular random trees}
\label{sec:unimodular}

In the remainder of the article we apply the theory of \emph{Benjamini-Schramm limits} to the sequence $T_n$ (\cref{constr:T_n}); we obtain \emph{unimodular random rooted trees of uniform intermediate growth}, answering a question by Itai Benjamini.
We investigate the structure of the limit graphs and provide an instance that is supported on a single deterministic tree.

A \emph{rooted graph} is a pair of the form $(G,o)$, where $G$ is a graph and $o\in V(G)$.\nlspace 
For a definition of \emph{random rooted graphs} and their basic properties we follow \cite{benjamini2011recurrence}: firstly, there is a natural topology on the set of rooted graphs -- the \emph{local topology} -- induced by the metric 
\begin{align*}
\dist\!\big((G,o),(G',o')\big):=2^{-R} \quad &\text{if } \ball Gor\cong \ball{G'}{o'}r \text{ for all } 0\leq r\leq R
\\
&\text{and } \ball Go{R+1}\ncong \ball{G'}{o'}{R+1},
\end{align*}
%
%
where it is understood that $\ball{G}{o}{r}$ is rooted at $o$ and that isomorphisms between rooted graphs preserve roots.

A random rooted graph $(G,o)$ is a Borel probability measure (for the local topology) on the set of locally finite, connected rooted graphs. In particular, it is \emph{deterministic} if it is the Dirac measure over a single rooted tree.
We call $(G,o)$ \emph{finite} if the set of infinite rooted graphs has $(G,o)$-measure zero. 
If in addition the conditional distribution of the root in $(G,o)$ over each finite graph is uniform, then $(G,o)$ is called \emph{unbiased}. 

Given a sequence $(G_n,o_n)$ of unbiased random rooted graphs, a random rooted graph $(G,o)$ is said to be the \emph{Benjamini-Schramm limit} of $(G_n,o_n)$ if for every rooted graph $(H,\omega)$ and natural number $r\ge 0$ we have 
\[\lim_{\mathclap{n\to \infty}}P\big(\ball{G_n\!}{o_n}r\cong (H,\omega)\big)=P\big(\ball{G}{o}r\cong (H,\omega)\big).\] 
Note that, if it exists, $(G,o)$ is the unique limit. 
If a random rooted graph~is~the~Ben\-jamini-Schramm limit of some sequence, we call it \emph{sofic}. 
One can show that the set of graphs of maximum degree $\le \Delta$ is compact in the local topology, and thus, a sequence $(G_n,o_n)$ of uniformly bounded degree always has a convergent~subsequence.

We say that a random rooted graph $(G,o)$ is of \emph{uniform growth} $g$ if there are~constants $c_1, C_1, c_2, C_2>0$ such that $(G,o)$ is \as\ of uniform growth $g$ \wrt\ these~constants (as in \eqref{eq:uniform_growth}). 
For unimodular graphs (in particular, for all graphs that~we care about below), this is equivalent to the existence of constants $c_1$, $C_1$, $c_2$, $C_2>0$ so that \as
$$C_1\cdot g(c_1r)\leq |B_G(o,r)|\leq C_2\cdot g(c_2r),\quad\text{for all $r\ge 0$}.$$
%
Indeed, the forward implication is obvious.  
For the backward implication, note~that in a unimodular graph, by \cite[Proposition 11]{curien},  
each rooted $r$-ball is isomorphic to $B_G(o,r)$~with~positive \mbox{probability}.


The original formulation of Benjamini's question asks for \emph{unimodular} trees~of intermediate growth.
Unimodularity generalizes the concept of a uniformly chosen root to graphs that are not necessarily finite: a random rooted graph $(G,o)$~is~unimodular if it obeys the \emph{mass transport principle}, \ie
\[\mathbb E\Big[\sum_{\mathclap{x\in V(G)}}f(G,o,x)\Big]=\mathbb E\Big[\sum_{\mathclap{x\in V(G)}}f(G,x,o)\Big]\]
for every \emph{transport function} $f$, which, for our purpose, are sufficiently defined~as~non-negative real Borel functions over doubly-pointed graphs (for a precise definition~we direct the reader to \cite{aldouslyons}).
The function $f$ simulates~mass transport between vertices, and the mass transport principle states, roughly, that the root $o$ sends, on average, as much mass to other vertices as it receives from~them.
Unimodular~graphs~are~significant in the theory of random graphs and encompass various important graph classes, most notably, all~sofic graphs.
The famous \emph{Aldous-Lyons conjecture} states the converse, that all unimodular random graphs are sofic \cite{aldouslyons}. 

In \cref{sec:main}, given a suitable function \gFun, we constructed a rooted tree $(T,x^*)$ of uniform growth $g$ ($x^*$\! being the vertex of $T_0\subset T$). 
Can~we turn this into a unimodular tree of uniform growth $g$?
Naturally, we can interpret $(T,x^*)$ as the random rooted graph \as\ being this particular rooted tree.
It is known that for unimodular graphs the conditional distribution~of the root over each underlying graph has positive probability on each vertex orbit \cite[Proposition 12]{curien}.
Thus, if $T$ is not vertex-transitive, this interpretation of $(T,x^*)$ as a random rooted graph cannot be unimodular. 




That being said, unimodular trees with uniform growth $g$ can be obtained from the defining sequence $T_n, n\ge 0$ (\cref{constr:T_n}).
Let $(T_n, o_n)$ denote the unbiased random rooted graph that is \as\ isomorphic to $T_n$.

\begin{proposition}\label{converge?}
    Let \gFun\ be super-additive and log-concave. Then~the sequence $(T_n,o_n)$ has a subsequence that converges in the Benjamini-Schramm sense to a unimodular random rooted tree of uniform growth $g$. 
\end{proposition}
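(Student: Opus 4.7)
The plan is to execute the standard Benjamini-Schramm convergence recipe, leveraging the estimates of \cref{thm:main} to control the growth of the limit.

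First I would verify that $(T_n, o_n)$ has uniformly bounded vertex degrees. Since $g$ is log-concave and super-additive, \cref{res:conditions_in_g} yields $\Gamma < \infty$, and \cref{thm:main}\itm3 then gives $\bar\Delta < \infty$. Every $T_n$ embeds as a subtree of the limit tree $T$ from \cref{constr:T} (say, the centric limit), so the maximum degree of each $T_n$ is at most $\bar\Delta$. Compactness of the space of rooted graphs with maximum degree at most $\bar\Delta$ in the local topology then produces a subsequence $(T_{n_k}, o_{n_k})$ converging to some random rooted graph $(\mathcal T, \omega)$.

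Two structural properties of the limit come for free. Unimodularity holds because each $(T_n, o_n)$ is a finite unbiased random rooted graph and Benjamini-Schramm limits of such sequences are unimodular \cite{aldouslyons}. The tree property is local in the following sense: every ball $\ball{T_n}{o_n}{R}$ is a subtree of $T_n$, hence acyclic; weak convergence of rooted balls at every radius $R$ implies that $\ball{\mathcal T}{\omega}{R}$ is \as\ a tree, and taking a countable intersection over $R \in \NN$ shows that $\mathcal T$ itself is \as\ a tree.

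The main step is establishing uniform growth $g$ for $(\mathcal T, \omega)$. My key observation is that the estimates behind \cref{thm:main} remain valid inside the finite tree $T_n$, provided the radius is not too large compared to $n$. Concretely, for any $v \in V(T_n)$ and any $R \le 2^{n+2}$: realizing $T_n$ as a subtree of $T$ yields $|\ball{T_n}{v}{R}| \le |\ball{T}{v}{R}| \le 3\Gamma \cdot g(4R)$ by \cref{thm:main}\itm3; for the lower bound, the argument of \cref{thm:main}\itm1 selects $k \in \NN_0$ with $2^{k+1} \le R \le 2^{k+2}$, and the canonical copy $\tau_k(v)$ sits inside $T_n$ as soon as $k \le n$, yielding $|\ball{T_n}{v}{R}| \ge |T_k| \ge \tfrac12 g(R/4)$. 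These bounds hold deterministically for \emph{every} vertex of $T_n$ once $n$ is large enough compared to $R$.

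Finally I would transfer the bounds to the limit. Weak convergence of the rooted ball distribution (together with the fact that only finitely many rooted balls of bounded degree and bounded radius exist up to isomorphism) implies, for each fixed $R \in \NN$,
\[ P\bigl(\tfrac12 g(R/4) \le |\ball{\mathcal T}{\omega}{R}| \le 3\Gamma \cdot g(4R)\bigr) = 1, \]
and a countable intersection over $R \in \NN$ yields the uniform growth bounds \as\ on $(\mathcal T, \omega)$. The main obstacle is precisely this transfer step: one must ensure that deterministic bounds on the finite $T_n$ descend to \as\ bounds on $(\mathcal T, \omega)$ simultaneously over \emph{all} radii, rather than merely distributionally for each fixed radius. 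The countable-intersection trick closes this gap once the single-radius statement is in place.
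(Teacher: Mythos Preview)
Your proposal is correct and follows essentially the same approach as the paper: bounded degree via \cref{res:conditions_in_g}, subsequential convergence by compactness, unimodularity from soficity, and transfer of the growth bounds from the finite $T_n$ to the limit. The only cosmetic difference is that the paper phrases the growth-transfer step as ``every ball of radius $r\le 2^{k_n-1}$ in $T_{k_n}$ is isomorphic to a ball in some limit tree $T$'' and then appeals to \cref{thm:main} directly, whereas you handle the upper and lower bounds separately (monotonicity under embedding for the upper bound, rerunning the $\tau_k(v)$ argument inside $T_n$ for the lower bound) and make the countable-intersection step explicit; both routes yield the same conclusion.
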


\begin{proof}
By \cref{res:conditions_in_g} the sequence $(T_n,o_n)$ has uniformly bounded degree, and, since the set of graphs with uniformly bounded degree is compact in the local~to\-pology, has a subsequence $(T_{k_n},o_{k_n})$ Benjamini-Schramm convergent to a random rooted tree $(\mathcal{T},\omega)$. 
By the definition of Benjamini-Schramm limit, a ball $\ball{\mathcal T}{\omega}{r}$ 
is \as\ isomorphic to a ball in $T_{k_n}$ for $k_n$ large enough. 
However, every ball of radius $r\leq 2^{k_n-1}$ in $T_{k_n}$ is isomorphic to a ball in some limit tree $T$ (\cf\  \cref{constr:T}) and by \cref{thm:main} therefore satisfies the bounds
$$C_1\cdot g(c_1r)\leq |\ball{\mathcal T}{\omega}{r}|\leq C_2\cdot g(c_2r),$$
with constants that only depend on the sequence $\delta_1,\delta_2,\delta_3,...$.
%

In conclusion, $(\mathcal{T},\omega)$ exhibits uniform growth of order $g$. 
Since $(\mathcal{T},\omega)$ is sofic (\ie\ it is the Benjamini-Schramm limit of unbiased graphs), it is also unimodular.
\end{proof}

At this point we have proven

\begin{theoremX}{\ref{unimain}}
    If \gFun\ is super-additive and (eventually) log-concave, then there exists a unimodular random rooted tree $(\mathcal T,\omega)$ of uniform growth $g$. 
\end{theoremX}

We can be more precise about the convergence of the sequence $T_n$ as it is actually not necessary to restrict to a subsequence: 

\begin{proposition}\label{converge}
    The sequence $T_n,n\ge 0$ is Benjamini-Schramm convergent.
\end{proposition}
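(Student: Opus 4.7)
The plan is to show that for every finite rooted graph $(H,\omega)$ and every $r\ge 0$, the probabilities $p_n := \P\!\big(\ball{T_n}{o_n}{r} \cong (H,\omega)\big)$ converge as $n\to\infty$; this is exactly what Benjamini--Schramm convergence of the sequence $(T_n,o_n)$ of unbiased random rooted graphs demands.  The overall strategy is to bound $|p_n - p_{n-1}|$ directly from the recursive construction of $T_n$ and to show that these differences form a summable sequence, making $\{p_n\}$ Cauchy.

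First I would exploit \cref{constr:T_n}: $T_n$ is built from $\delta_n+1$ disjoint copies $\tau_0,\ldots,\tau_{\delta_n}$ of $T_{n-1}$ by adding $\delta_n$ \emph{bridging edges}, each joining the center of some apocentric $\tau_i$ ($i\ge 1$) to an apocentric vertex of $\tau_0$.  Call a vertex $v\in V(T_n)$ \emph{unaffected} (at radius $r$) if no endpoint of a bridging edge lies within distance $r$ of $v$; for such a $v\in\tau_i$, the ball $\ball{T_n}{v}{r}$ coincides with the ball computed inside $\tau_i$ alone, and since $\tau_i\cong T_{n-1}$, the unaffected vertices contribute to $p_n$ exactly as their images contribute to $p_{n-1}$.

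Next I would estimate the fraction of affected vertices.  There are at most $2\delta_n$ bridging endpoints, and each lies within distance $r$ of at most $B(r):=\sup_n \max_v |\ball{T_n}{v}{r}|$ vertices; the quantity $B(r)$ is finite because the whole sequence $T_n$ has uniformly bounded maximum degree under the hypotheses carried over from the previous sections (\cref{res:conditions_in_g}).  Splitting the sum that defines $p_n|T_n|$ into affected and unaffected contributions and using $|T_n|=(\delta_n+1)|T_{n-1}|$ yields
$$|p_n - p_{n-1}| \;\le\; \frac{2\delta_n B(r)}{|T_n|} \;\le\; \frac{2B(r)}{|T_{n-1}|}.$$
Since $\delta_k\ge 1$ forces $|T_{n-1}|\ge 2^{n-1}$, the right-hand side is summable in $n$, so $\{p_n\}$ is Cauchy and hence convergent.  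Applying this to every $(H,\omega)$ and every $r$ then yields Benjamini--Schramm convergence.

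The only real bookkeeping to be careful with is the last splitting step: one must check that replacing \enquote{ball in $T_n$} by \enquote{ball in $\tau(v)$} changes the count only on affected vertices, so that the two resulting error terms are each $O(\delta_n B(r))$ and combine into a single $O(\delta_n B(r))$ estimate.  Everything else is an immediate application of the recursive structure of $T_n$ together with the bounded-degree property already established earlier in the paper.
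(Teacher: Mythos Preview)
Your proposal is correct and follows essentially the same approach as the paper: the paper likewise bounds $|P_n-P_{n-1}|$ by observing that only vertices within distance $r$ of one of the $2\delta_n$ endpoints of the new edges can have their $r$-ball change, yielding $|s_n-(\delta_n+1)s_{n-1}|\le 2C_r\delta_n$ and hence $|P_n-P_{n-1}|\le 2C_r/|T_{n-1}|$, after which $|T_{n-1}|\ge 2^{n-1}$ gives a summable bound. The only cosmetic difference is that the paper phrases things in terms of the count $s_n$ of \enquote{good} vertices rather than your \enquote{unaffected/affected} dichotomy, and it cites \cref{rem:dbar} (rather than \cref{res:conditions_in_g}) for the bounded-degree/ball-size constant.
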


\begin{proof}
    Fix $r\ge 1$ and a graph $H$.
    Call a vertex $v\in V(T_n)$ \emph{good} if $\ball{T_n} v r\cong H$.
    Let $P_n$ be the probability that a uniformly chosen vertex of $T_n$ is good.
    We show that $P_n$ is a Cauchy sequence, hence convergent.

    Let $s_n$ be the number of good vertices in $T_n$.
    Since $T_n$ consists of $\delta_n+1$ copies of $T_{n-1}$, one might expect $s_n\approx (\delta_n+1)s_{n-1}$.
    However, besides the copies, $T_n$ also consists of $\delta_n$ new edges, and so for $\tau\prec_{n-1} T_n$ and $v\in V(\tau)$, the $r$-ball around $v$ in $\tau\cong T_{n-1}$ might look different from the $r$-ball around $v$ in $T_n$.
    Still, this can only happen if $v$ is $r$-close to one of the $2\delta_n$ ends of the new edges.
    Let $C_r$ be an upper bound on the size of an $r$-ball in $T$ (which exists since $T$ is of bounded degree, \cf\ \cref{rem:dbar}). We can then conclude
    $$|s_n-(\delta_n+1)s_{n-1}|\le 2C_r\delta_n.$$
    Since $P_n=s_n/|T_n|$, division by $|T_n|=(\delta_n+1)|T_{n-1}|$ yields
    \begin{align*}
        &|P_n-P_{n-1}|\le \frac{\delta_n}{\delta_n+1}\frac{2C_r}{|T_{n-1}|}\le \frac{2C_r}{|T_{n-1}|}.
    \end{align*}
    It follows for all $k\ge 1$
    \begin{align*}
    |P_{n+k}-P_n| 
    &\le |P_{n+k}-P_{n+k-1}|+\cdots+|P_{n+1}-P_n|
    \\ &\le 2C_r\Big(\frac1{|T_{n+k-1}|}+\cdots+\frac1{|T_{n}|}\Big)
    \le 2C_r\sum_{i=n}^{\infty} \frac1{|T_i|}.
    \end{align*}
    Since $\delta_n\ge 1$ and $|T_n|=(\delta_1+1)\cdots(\delta_n+1)$ we have $|T_n|\ge 2^n$. It therefore follows
    \begin{align*}
    |P_{n+k}-P_n| \le \frac{4C_r}{2^n},
    \end{align*}
    and $P_n$ is indeed a Cauchy sequence.
\end{proof}

In the following, let $(\mathcal T,\omega)$ denote the Benjamini-Schramm limit of the sequence $T_0, T_1, T_2,...$, where $T_n$ is short for the unbiased random rooted tree $(T_n,o_n)$.

While \cref{converge?} and \cref{converge} establish that $(\mathcal T,\omega)$ exists, they are not particularly enlightening with regards to the structure of the obtained limit. 
We now prove that $(\mathcal T,\omega)$ is \as\ a limit tree in the sense of \cref{constr:T}.
We shall also find a qualitative differences between limit trees obtained from a sequence $\delta_1,\delta_2,\delta_3,...$ with $\sum_n\delta_n^{-1}<\infty$ versus $\sum_n\delta_n^{-1}=\infty$.
The former sequences we shall call \emph{fast-growing}, and the latter \emph{slow-growing}.
The main structure theorem reads as follows:

\begin{main}[Structure Theorem]\label{structure}
Let $\delta_1,\delta_2,\delta_3,...\ge 2$ be a sequence of integers.
\begin{myenumerate}
    \item
    If it is fast-growing, then $(\mathcal{T},\omega)$ is \as\ an apocentric limit.
    \item
    If it is slow-growing, then $(\mathcal{T},\omega)$ is \as\ a mixed limit and the probability for being isomorphic to any particular deterministic tree is zero. 
\end{myenumerate}
In both cases $(\mathcal T,\omega)$ is \as\ 1-ended.
\end{main}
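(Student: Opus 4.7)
The plan is to realise $(\mathcal T,\omega)$ explicitly via an independent Bernoulli sequence that records the centric/apocentric status of the canonical chain $\tau_0(\omega)\subset\tau_1(\omega)\subset\cdots$, and then apply Borel--Cantelli together with a product-measure computation.

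First I would set up this chain representation. For a uniformly random $o_n\in V(T_n)$ and each $k\le n$, let $X_k^{(n)}\in\{0,1\}$ indicate whether $\tau_{k-1}(o_n)$ is the central copy of $\tau_k(o_n)$. Since $\tau_k(o_n)$ decomposes into $\delta_k+1$ disjoint copies of $T_{k-1}$ of which exactly one is central, and since $o_n$ is distributed uniformly, the variables $X_1^{(n)},\ldots,X_n^{(n)}$ are mutually independent Bernoullis with $P(X_k^{(n)}=1)=|T_{k-1}|/|T_k|=1/(\delta_k+1)$. The Benjamini--Schramm convergence from \cref{converge} then transfers this to an infinite sequence $(X_k)_{k\ge 1}$ of independent Bernoullis with identical marginals, encoding the centric/apocentric pattern of the canonical chain in $(\mathcal T,\omega)$.

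With the chain in hand, parts \itm1 and \itm2 follow from Borel--Cantelli. In the fast-growing case, $\sum 1/(\delta_k+1)<\infty$, so almost surely $X_k=1$ only finitely often and $(\mathcal T,\omega)$ is an apocentric limit (after re-indexing past the last centric entry). In the slow-growing case, both $\sum 1/(\delta_k+1)$ and $\sum\delta_k/(\delta_k+1)\ge\sum 2/3$ diverge; independence and the second Borel--Cantelli lemma then yield both $\{X_k=0\}$ and $\{X_k=1\}$ infinitely often almost surely, giving a mixed limit. For the probability-zero claim, any deterministic rooted tree $(T^*,v^*)$ arising from \cref{constr:T} determines a unique chain $(x_k)\in\{0,1\}^{\mathbb N}$ via its canonical decomposition, and the probability that $(X_k)=(x_k)$ equals $\prod_k p_k$ with $p_k\in\{1/(\delta_k+1),\,\delta_k/(\delta_k+1)\}$. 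In the slow-growing case this product vanishes: infinitely many $x_k=1$ forces $\prod p_k\le\prod 1/3=0$, while cofinitely many $x_k=0$ forces $\prod p_k\le\prod\delta_k/(\delta_k+1)=0$ by $\sum 1/(\delta_k+1)=\infty$; deterministic trees not arising from \cref{constr:T} are never attained.

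One-endedness is immediate in the apocentric case, since $T_{n+1}\setminus T_n$ is connected and attaches to $T_n$ through a single edge at the center of $T_n$, so $\mathcal T\setminus T_n$ is connected for every $n$. In the mixed case I would track, at each centric step $m$, the lateral sub-tree spawned at the apocentric vertex $\omega$: its number of apocentric vertices $A_m$ evolves as $A_m=\delta_m A_{m-1}$ when $X_m=1$ and $A_m=A_{m-1}$ when $X_m=0$, and its total size grows at step $m$ (when $X_m=1$) by roughly $A_{m-1}\cdot\Delta(m)\cdot|T_{m-1}|$. Because $\delta_1\cdots\delta_{m-1}\ge 2^{m-1}$ makes $\Delta(m)$ decay exponentially and the almost-sure density of centric steps at level $k$ is the small quantity $1/(\delta_k+1)$, the expected total lateral growth forms a convergent geometric series; a mass-transport argument exploiting unimodularity of $(\mathcal T,\omega)$ then upgrades this to almost-sure finiteness of every lateral branch, leaving the ``main chain'' as the unique infinite ray. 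I expect this one-endedness bound to be the principal obstacle; the Borel--Cantelli and product-measure steps are comparatively routine.
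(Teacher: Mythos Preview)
Your Borel--Cantelli framing for parts \itm1 and \itm2 and for the zero-probability claim is correct and is essentially a cleaner rewording of what the paper does: the paper works with the products $p_{(k)}=\prod_{n\le k}p_n$ and $p^{(k)}=\prod_{n>k}p_n$ (with $p_n=\delta_n/(\delta_n+1)$), whose vanishing/positivity is exactly the Borel--Cantelli dichotomy for your independent $X_k$'s. One point you gloss over is the passage from the finite variables $X_k^{(n)}$ to random variables on the limit $(\mathcal T,\omega)$: the Benjamini--Schramm limit is a measure on isomorphism classes of rooted graphs, not on decorated trees, so you must argue that the centric/apocentric status of $\tau_{k-1}(\omega)\prec\tau_k(\omega)$ is determined by a bounded-radius ball around $\omega$. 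The paper sidesteps this by formulating its events $A_{k,r}$ and $B_{k,r}$ directly as ball-isomorphism events, and then packaging the conclusion via \cref{excellent}.

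The real gap is one-endedness in the slow-growing case. Your proposed argument has two problems. First, the claim that $\Delta(m)=\delta_m/(\delta_1\cdots\delta_{m-1})$ ``decays exponentially'' is not justified: $\delta_1\cdots\delta_{m-1}\ge 2^{m-1}$ controls only the denominator, while slow growth of $(\delta_n)$ gives no uniform upper bound on the numerator $\delta_m$. Second, even granting some decay, finiteness of \emph{expected} lateral growth plus an unspecified mass-transport step does not obviously yield that \emph{every} branch off the chain of centers is a.s.\ finite; the lateral copies attached at a centric step have apocentric vertices that themselves receive new attachments at later centric steps, and controlling this recursion is the whole difficulty. The paper's route is quite different and does not go through any growth estimate: it proves the combinatorial \cref{slow}, which says that in any slow-growing sequence the indices with $\delta_{n_k}\le\nu\,\delta_{n_k-1}\delta_{n_k-2}$ themselves form a slow-growing subsequence. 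At such indices the maximal-uniform-distribution rule \eqref{eq:max_uniform} forces a positive fraction of copies $\tau\prec_{k_n}\mathcal T$ to be separated from $\mathcal T\setminus\tau$ by a \emph{single} edge at their center; slow growth of the subsequence then guarantees $\omega$ lies in infinitely many such copies a.s., and \cref{excellent} delivers both the limit-tree structure and one-endedness at once. This is the missing idea in your plan.
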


\begin{remark}
If $\delta_n\in\Omega(n^\alpha)$ for some $\alpha>1$ then it is fast-growing, whereas if $\delta_n\in O(n)$ then it is slow-growing.
In terms of $g$, the threshold of \cref{structure} can be located roughly at $g(r)=r^{\alpha \log\log r}$ between $\alpha=1$ and $\alpha>1$.
In fact, note that
$$g(2^n)=(2^n)^{\alpha\log\log 2^n} = n^{\alpha n}$$
and according to \cref{res:d_i}
$$
\delta_n\sim\frac{g(2^{n+1})}{g(2^n)}
=(n+1)^\alpha \cdot \Big(1+\frac1n\Big)^{\alpha n} \sim (n+1)^\alpha e^\alpha \sim n^\alpha.
$$
\end{remark}

To establish \cref{structure} we prove two auxiliary lemmas.

\begin{lemma} \label{excellent}
    Suppose that there exists a sequence $k_n$ such that, for every $n$, $\omega$ \as\ lies in a tree $\tau_n\cong T_{k_n}$, the center of which is incident to an edge that separates it from $\mathcal{T}\setminus \tau_n$. Then $\mathcal{T}$ is \as\ 1-ended and a limit tree in the sense of \cref{constr:T}.
\end{lemma}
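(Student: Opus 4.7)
The plan is to prove 1-endedness first — directly from the hypothesis, by contradiction — and then use 1-endedness to obtain nesting, exhaustion, and the \cref{constr:T} structure for $\mathcal{T}$.

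First I would show $\mathcal{T}$ is 1-ended by contradiction. Suppose $\mathcal{T}$ admits a bi-infinite geodesic line $L$. The crucial consequence of the single-edge-at-centre hypothesis is that $\mathcal{T}\setminus V(\tau_n)$ is connected: only the centre $c_n$ has an external neighbour. So if $\omega$ lay on $L$, removing $V(\tau_n)$ would split $L$ into two infinite half-lines whose reconnection inside $\mathcal{T}\setminus V(\tau_n)$ would produce a second $\mathcal{T}$-path between vertices on opposite halves of $L$ — impossible in a tree. Hence $\omega\notin L$, and letting $v$ be the vertex of $L$ nearest $\omega$, the same argument applied at $v$ forces $v\notin\tau_n$ for every $n$. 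Then $e_n$ must lie on the finite $\mathcal{T}$-path from $\omega$ to $v$, so only finitely many distinct edges $e_n$ can arise; but $e_n$ determines $\tau_n$ as the $\omega$-component of $\mathcal{T}\setminus e_n$, contradicting $|\tau_n|=|T_{k_n}|\to\infty$ (after passing to a strictly increasing subsequence of $k_n$).

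Once 1-endedness is established, let $R_\omega=\omega,v_1,v_2,\ldots$ denote the unique infinite ray from $\omega$. Every $e_n$ must be an edge of $R_\omega$, since any cut off the ray leaves $R_\omega$ entirely in the $\omega$-component and makes $\tau_n$ infinite. Writing $e_n=(v_{i_n},v_{i_n+1})$, the strictly increasing cardinalities $|\tau_n|$ force $i_n$ to be strictly increasing, so $\tau_1\subsetneq\tau_2\subsetneq\cdots$. The same finite-path argument as above now yields $\mathcal{T}=\bigcup_n\tau_n$: any $u\in V(\mathcal{T})$ outside every $\tau_n$ would force each $e_n$ onto the finite path $\omega\to u$, contradicting the strict monotonicity of $\tau_n$.

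Finally, inside each $\tau_{n+1}\cong T_{k_{n+1}}$, the subtree $\tau_n\cong T_{k_n}$ appears as a canonical copy attached to $\tau_{n+1}\setminus\tau_n$ by a single edge at its centre; a structural induction on \cref{constr:T_n} identifies such canonical copies as the iterated-apocentric ones (they are precisely the ones avoiding the centre of $\tau_{n+1}$). Interpolating the intermediate canonical copies $T_{k_n+1},\ldots,T_{k_{n+1}-1}$ between $\tau_n$ and $\tau_{n+1}$ realises each step as a central or apocentric identification, and prefixing any canonical chain $T_0\subset T_1\subset\cdots\subset T_{k_1}=\tau_1$ inside $\tau_1$ identifies $\mathcal{T}$ as a limit tree in the sense of \cref{constr:T}. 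The principal obstacle is the 1-endedness step, which depends critically on exploiting the single-edge-at-centre hypothesis to force $\mathcal{T}\setminus V(\tau_n)$ to be a single connected component; once that is in hand, nesting, exhaustion and the limit-tree structure all fall out routinely.
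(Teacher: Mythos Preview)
Your proof is correct but follows a genuinely different route from the paper's. The paper proceeds in the opposite order: it first shows directly that the $\tau_n$ are unique and nested --- two finite subtrees of a tree, each separated from its complement by a single edge and each containing $\omega$, must be comparable (otherwise their infinite complements would be disjoint) --- then forms the inclusion chain $\tau_1\subset\tau_2\subset\cdots$, identifies its union with $\mathcal{T}$ by comparing $r$-balls, and only \emph{afterwards} reads off 1-endedness by exhibiting the ray through the successive centres and observing that any other ray must pass through each centre. Your approach instead proves 1-endedness first via a contradiction with a bi-infinite geodesic, then exploits the unique ray from $\omega$ to line up the edges $e_n$ and deduce nesting and exhaustion. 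The paper's order is somewhat more economical (nesting is immediate from the single-cut-edge property, without any contradiction argument), while your 1-endedness argument is a clean, self-contained piece that does not presuppose the chain structure. One small wrinkle: your assertion that canonical copies with a single outgoing edge at the centre are precisely the ``iterated-apocentric'' ones is not quite right in general --- rounding in the maximally uniform distribution can leave a non-fully-apocentric copy with zero outward edges at its apocentric vertices --- and the lemma is also invoked for mixed limits; but this does not affect your conclusion, and the paper is comparably brief when it comes to identifying $\tau_n$ as a canonical copy inside $\tau_{n+1}$.
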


\begin{proof}
    We begin by noting that for each $n\in \mathbb{N}$, $\tau_n$ is unique (and may therefore be unambiguously denoted $\tau_n(\omega)$). 
    Indeed, if there exists another tree $\tau_n'$ with the same properties, then $\omega\in\tau_n\cap\tau_n'$. 
    Since the centre of $\tau_n$ is incident to an edge that separates it from $\mathcal{T}\setminus \tau_n$, no part of $\tau_n'\setminus \tau_n$ may be in the same side of that edge as $\omega$. Therefore either $\tau_n'\setminus \tau_n=\emptyset\Rightarrow \tau_n'\subseteq \tau_n$, or else $\tau_n\subseteq \tau_n'$. Since the two trees are isomorphic, we obtain equality. A similar argument yields $\tau_n(\omega)\subseteq \tau_{n+1}(\omega)$ for every $n\in\mathbb{N}$.  
    
    Explicitly, the corresponding inclusion chain limit $(\mathbf{T},\mathbf{o})$ is simply the limit of the finite inclusion chains of the copies $(\tau_n,\omega)$. This exists, since these chains are compatible (i.e. each is a restriction of the next). It is straightforward to see that $(\mathcal{T},\omega)\cong(\mathbf{T},\mathbf{o})$ by comparing their rooted balls of equal radius. 

    One ray $R_1$ of $(\mathbf{T},\mathbf{o})$ is obtained by concatenating the paths $P_n$ from $x_n$ to $x_{n+1}$, where $x_0=\mathbf{o}$ and $x_n$ is the center of $\tau_n(\mathbf{o})$ for $n>0$. Any other ray $R_2$ has to begin inside a ball centered at $\mathbf{o}$, hence inside some $\tau_n(\mathbf{o})$, which implies, since the latter is separated from $\mathcal{T}$ by an edge incident to its center, that $R_2$ must contain the center of $\tau_n(\mathbf{o})$, thus intersecting $R_1$. We deduce that $\mathbf{T}$ is 1-ended.
\end{proof}

\begin{lemma} \label{slow}
    Let $\delta_1,\delta_2,\delta_3,...$ be a slow-growing sequence with $\delta_n\ge 2$ eventually.
    For a $\nu>1/2$ let $\delta_{n_k}$ be the subsequence of elements satisfying $\delta_{n_k}\le \nu \delta_{n_k-1}\delta_{n_k-2}$. 
    Then $\delta_{n_k}$ is itself slow-growing.
\end{lemma}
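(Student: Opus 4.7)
The plan is to prove \cref{slow} by a charging argument. I will show that the harmonic sum over the discarded (``bad'') indices is bounded above by a constant multiple of the harmonic sum over the kept (``good'') indices. Since by hypothesis the original sequence is slow-growing, \ie\ $\sum_n 1/\delta_n = \infty$, this domination will force $\sum_k 1/\delta_{n_k} = \infty$, which is precisely slow-growth of the selected subsequence. Writing $B$ for the set of bad indices (those $n$ satisfying $\delta_n > \nu\delta_{n-1}\delta_{n-2}$) and $G = \{n_k\}$ for the good indices, my target inequality is $\sum_{n \in B} 1/\delta_n \le C \sum_{m \in G} 1/\delta_m + O(1)$ for a constant $C = C(\nu)$.

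The whole proof rests on one local estimate. Fix $N_0$ so that $\delta_n \ge 2$ for all $n \ge N_0$. For any $n \in B$ with $n - 2 \ge N_0$, the inequality $\delta_{n-2} \ge 2$ upgrades the bad condition to $\delta_n > 2\nu\delta_{n-1}$, equivalently $1/\delta_n < r/\delta_{n-1}$ with $r := 1/(2\nu) \in (0,1)$. The hypothesis $\nu > 1/2$ is exactly what makes $r$ a genuine contraction factor; without it the argument collapses. To globalize, I will partition $B$ (outside a finite initial segment) into maximal runs of consecutive bad indices immediately following a good index, each run of the form $\{m+1, \ldots, m+j\}$ with $m \in G$. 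Iterating the local contraction inequality along such a run yields the geometric decay $1/\delta_{m+i} < r^i/\delta_m$ for $i = 1, \ldots, j$. Summing first within each run and then over all $m \in G$ gives
\begin{equation*}
\sum_{n \in B} \frac{1}{\delta_n} \;\le\; \sum_{m \in G} \frac{1}{\delta_m} \sum_{i=1}^{\infty} r^i \;+\; O(1) \;=\; \frac{1}{2\nu - 1}\sum_{m \in G} \frac{1}{\delta_m} + O(1),
\end{equation*}
which is the required bound with $C = 1/(2\nu - 1)$.

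The main obstacle I anticipate is the bookkeeping around the initial segment of the sequence: bad indices with $n \le N_0 + 1$, a possibly non-empty initial run of bad indices with no preceding good index $\ge N_0$, and the fact that the base case $i = 1$ of the within-run induction needs $\delta_{m-1} \ge 2$, which requires $m \ge N_0 + 1$ rather than just $m \ge N_0$. All of these yield only finite contributions that can be absorbed into the $O(1)$ term, but they have to be laid out cleanly so that no divergent tail is inadvertently hidden in the initial segment.
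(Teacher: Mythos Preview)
Your proposal is correct and in fact takes a simpler route than the paper. The paper, working within each maximal run of bad indices after a good index $n_k$, exploits the full two-term recurrence $\delta_{n_k+i} > \nu\,\delta_{n_k+i-1}\delta_{n_k+i-2}$ to deduce Fibonacci-exponential growth $\delta_{n_k+i}\ge \tfrac1\nu(\nu\delta_{n_k})^{F_{i+1}}$, and then sums the reciprocals using that the Fibonacci exponents grow fast enough for $\sum_i (2\nu)^{-(F_{i+1}-1)}$ to converge. You instead throw away the factor $\delta_{n-2}$ after using only $\delta_{n-2}\ge 2$, obtaining the one-step geometric contraction $1/\delta_n < r/\delta_{n-1}$ with $r=1/(2\nu)<1$; iterating along a run gives a plain geometric series and the explicit constant $C=1/(2\nu-1)$. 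Your approach is strictly more elementary: it avoids the Fibonacci bookkeeping entirely and makes the role of the hypothesis $\nu>1/2$ completely transparent. The paper's Fibonacci estimate is sharper within a run but this extra sharpness is never needed. One small point you leave implicit: the partition of $B$ into runs following good indices requires that $G$ be infinite; this follows immediately from your own contraction estimate, since an eventual infinite run of bad indices would force $\sum_n 1/\delta_n<\infty$, contradicting slow growth.
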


\begin{proof}
    Set $N_k:=n_{k+1}-n_{k}-1$. Then $\delta_{n_k+i}>\nu\delta_{n_k+i-1}\delta_{n_k+i-2}$ for \mbox{$i\in\{1,...,N_k\}$}.
    Since $\delta_n\ge 2$ eventually, there is a $k_0$ so that $\delta_{n_k-1}\ge 2$ for all $k\ge k_0$.
    We then first verify the following: for $k\ge k_0$ holds
    $$(*)\;\;\delta_{n_k+i}\ge \tfrac1\nu (\nu \delta_{n_k})^{F_{i+1}}\quad\text{for $i\in\{0,...,N_n\}$},$$
    where $F_i$ is the $i$-th Fibonacci number starting with $F_1=F_2=1$.
    We verify this inductively:
    \begin{align*}
        \delta_{n_k+0} &= \tfrac1\nu (\nu\delta_{n_k})^{F_1},
        \\
        \delta_{n_k+1} &> \nu \delta_{n_k}\delta_{n_k-1}>\tfrac12\cdot\delta_{n_k}\cdot 2= \delta_{n_k} = \tfrac1\nu (\nu \delta_{n_k})^{F_2},        
        \\
        \delta_{n_k+i} &> \nu\delta_{n_k+i-1}\delta_{n_k+i-2}
        \\ &\ge \nu\cdot\tfrac1\nu(\nu\delta_{n_k})^{F_{i}}\cdot\tfrac1\nu(\nu\delta_{n_k})^{F_{i-1}}
        = \tfrac1\nu(\nu\delta_{n_k})^{F_i+F_{i-1}} = \tfrac1\nu(\nu\delta_{n_k})^{F_{i+1}}
    \end{align*}
    Since $\delta_n$ is slow growing, we have
    \begin{align*}
       \infty= \sum_{\mathclap{n\ge n_{k_0}}}\delta_n^{-1} &= \sum_{k\ge k_0} \sum_{i=0}^{N_k} \delta_{n_k+i}^{-1} 
        \overset{(*)}< \nu \sum_{k\ge k_0} \sum_{i=0}^{N_k} (\nu \delta_{n_k})^{-F_{i+1}}
        \\&= \nu \sum_{k\ge k_0}(\nu \delta_{n_k})^{-1}\sum_{i=0}^{N_k} (\nu \delta_{n_k})^{-F_{i+1}+1}
        \\ &\le \nu \sum_{k\ge k_0} (\nu \delta_{n_k})^{-1}\underbrace{\sum_{i=0}^{\infty} (2\nu)^{-(F_{i+1}-1)}}_{=:\, K>0}
        = K \sum_{k\ge k_0} \delta_{n_k}^{-1},
    \end{align*}
    where the infinite series converges because $2\nu>2\cdot \tfrac12=1$ and the exponents $F_{i+1}-1$ grow exponentially fast.
    Hence, $\delta_{n_k}$ is slow-growing.
    %
    %
\end{proof}

We are now ready to prove \cref{structure}.

\begin{proof}[Proof of \cref{structure}]
Define $p_n:=\delta_n/(\delta_n+1)$ and note that this is precisely the probability that a vertex $v$ chosen uniformly from $T_N$ (for an arbitrary fixed $N\geq n$) satisfies  $\tau_{n-1}(v)\prec_{n-1} \tau_n(v)\prec_n T_N$ with the first inclusion being apocentric. 

We observe that the product $\prod_n p_n$ is positive if and only if $\delta_1,\delta_2,\delta_3,...$ is a fast-growing sequence: we know that $\prod_n p_n$ is positive if and only if its inverse $(\prod_n p_n)^{-1}=\prod_n (1+\delta_n^{-1})$ converges. As is well-known, the latter converges if and only if $\sum_n \delta_n^{-1}$ converges.

We start with the proof of \itm1. For a fast-growing sequence and any particular $k\in\mathbb{N}$, the tail product $p^{(k)}:=\prod_{n=k+1}^{\infty}p_n$ is positive. Additionally, note that $p^{(k)}$ is the limit of the probability that $o_n$ (uniformly chosen from $T_n$) lies in an apocentric copy $\tau\prec_k T_n$ as $n\to\infty$. 
    
Consider the event $A_{k,r}$ that the ball $B_{\mathcal{T}}(\omega,r)$ is isomorphic to a ball centered at a vertex of an apocentric copy $\tau\prec_k T_n$ (for $n=n(r)=2^k+r$, \ie\ large enough that the center of $T_n$ is not contained in the ball of radius $r$ centered at the center of $\tau$).
Note that for every $r\geq 0$, $\P(A_{k,r})\geq p^{(k)}$. 
Let $A_k:=\bigcap_{r\geq 0}A_{k,r}$. That is, $A_k$ is the event that for every $r\geq 0$ the ball $B_{\mathcal{T}}(\omega,r)$ is isomorphic to a ball centered at an apocentric $\tau\prec_k T_n$. Since $A_{k,r}$ is decreasing in $r$, we have 
$$\P(A_k)=\lim_{r\rightarrow \infty}\P(A_{k,r})\geq p^{(k)}.$$
Since $A_k$ is increasing in $k$, 
$$\P\Big(\!\bigcup_{k\geq 0}\!A_k\Big)=\lim_{k\rightarrow \infty}\P(A_k)\geq \lim_{k\rightarrow \infty}p^{(k)}=1.$$
Therefore, there exists \as\ $k\in \NN_0$ such that the event $A_k$ occurs. 
That means, each ball around $\omega$ shows it lying in an apocentric copy of $T_n$ for every sufficiently large $n$. By \cref{excellent}, this implies that $(\mathcal{T},\omega)$ is \as\ an apocentric limit, and in particular 1-ended.   


Subsequently, we proceed with the proof of \itm2. For a slow-growing sequence, let $p_{(k)}:=\prod_{n=1}^kp_n$ and note that $p_{(k)}$ is the probability that a uniformly chosen vertex $v$ of a tree $T_n$, $n\geq k$ is apocentric in a copy $\tau\prec_k T_n$. 
Such a vertex will be called a $k$\emph{-apocentric vertex}, and any ball centered at it will be called a $k$\emph{-apocentric ball}. 
Let $B_{k,r}$ be the event that the ball $B_{\mathcal{T}}(\omega,r)$ is isomorphic to a $k$-apocentric ball of $T_n$ for $n$ large enough. 
Note that, for $r$ (and $n$) large enough, the only vertices in $T_n$ that are the centers of balls isomorphic to $k$-apocentric balls are indeed $k$-apocentric vertices, hence $\P(B_{k,r})=p_{(k)}$. Also, observe that $B_{k,r}$ is a decreasing sequence in $r$. Let $B_k:=\bigcap_{r\geq 0}B_{k,r}$. That is, $B_k$ is the event that for every $r\geq 0$ the ball $B_{\mathcal{T}}(\omega,r)$ is isomorphic to a $k$-apocentric ball in $T_n$ for $n$ large enough. We deduce that 
$$\P(B_k)=\lim_{r\rightarrow \infty}\P(B_{k,r})=  p_{(k)}.$$ 
Since $B_k$ is also a decreasing sequence in $k$, 
$$\P\Big(\!\bigcap_{k\geq 0}\!B_k\Big)=\lim_{k\rightarrow \infty}\P(B_k)=\lim_{k\rightarrow \infty}p_{(k)}=0.$$ 
Therefore $B_{\mathcal{T}}(\omega, r)$ is \as\ not isomorphic to a $k$-apocentric ball for some $k, r\in \NN$, \ie\ $\tau_{k_0}(\omega)\prec_{k_0}\tau_{k_0+1}'(\omega)\prec_{k_0+1}\mathcal{T}$ and $\tau$ is central in $\tau'$ for some $k_0<k$. Since for any $k_n\in\mathbb{N}$ 
$$\P\Big(\!\bigcap_{k\geq k_n}\!B_k\Big)=0,$$ 
we may repeat the above argument to iteratively obtain a sequence of copies $\tau_{k_n}(\omega)\prec_{k_n}\tau_{k_n+1}(\omega)\prec_{k_n+1}\mathcal{T}$ with $\tau_{k_n}$ central in $\tau_{k_n+1}$.
The proof that $\omega$ lies \as\ in a sequence of copies $\tau_{l_n}(\omega)\prec_{l_n}\tau_{l_n+1}(\omega)\prec_{l_n+1}\mathcal{T}$ with $\tau_{l_n}$ apocentric in $\tau_{l_n+1}$ is completely analogous.

Note that, since we have a slow-growing sequence, there are infinitely many indices $k_n$ for which $\delta_{k_n+2}\le \frac23 \delta_{k_n+1}\delta_{k_n}$, 
which yield sub-trees of $\mathcal{T}$ that are isomorphic to $T_{k_n}$ and separated from the rest of $\mathcal{T}$ by an edge incident to their center. Moreover, there is a subsequence of these indices $k_n$ such that $\omega$ lies in these trees $T_{k_n}$. Indeed, the probability of the complement is at most $\sum_{k=0}^{\infty}\prod_{i=k}^{\infty}q_{k_n}$, where $q_{k_n}:=\frac{\delta_{k_n}+\frac{7}{9}}{\delta_{k_n}+1}$. By \cref{slow} this equals zero. By \cref{excellent} we derive that $(\mathcal{T},\omega)$ is a 1-ended mixed limit.

Finally, it remains to show that $(\mathcal{T},\omega)$ has zero probability of being any particular deterministic tree. Note that, if $\delta_n>1$ eventually, then each inclusion chain yields a different sequence of balls $B_{\mathcal{T}}(\omega,r), r\in\{1,2,3,...\}$, \ie\ a different rooted limit tree. Also, for a slow-growing sequence in particular, $\prod_{n=1}^{\infty}\mathcal{P}_n=0$, where each $\mathcal{P}_n$ can be equal to either $p_n$ or $1-p_n$. That is to say, each particular inclusion chain, and therefore from the previous observation each rooted limit tree of \cref{constr:T}, is sampled from our mixed limit tree $(\mathcal{T},\omega)$ with probability zero. Since each such tree has only countably many vertices, any particular tree has probability zero to be the underlying tree of $(\mathcal{T},\omega)$. 
\end{proof}

Due to the semblance of apocentric limits to the classic canopy tree, we also~use the term \emph{generalized canopy tree} to refer to the limit tree $(\mathcal{T},\omega)$ induced~by~a~fast-growing sequence.

The significance of the distinction worked out in \cref{structure} becomes more appa\-rent with an example: generalized canopy trees (even of intermediate growth) can~be \as\ isomorphic to a single deterministic tree.
In essence, such limits can be seen as deter\-ministic trees with a randomly chosen root.
This is not possible for limit trees induced by slow-growing sequences (ex\-cept for trivial cases such as $\delta_n = 1$).

\begin{example}\label{ex:deterministic}
Define recursively $\delta_1:=1,\delta_2:=2$ and $\delta_{n+1}:=\delta_n\delta_{n-1}$.
In explicit form this reads
$$\delta_n=2^{F_n}, \quad\text{for all $n\ge 0$},$$
where $F_n$ denotes the $n$-th Fibonacci number starting with $F_1=0, F_2=1$. 

Let $T_n,n\ge 0$ be the sequence of trees according to \cref{constr:T_n}.
By \cref{converge} $(T_n,o_n)$ converges in the Benjamini-Schramm sense to a random rooted tree $(\mathcal T,\omega)$ of uni\-form volume growth. 
%
From $|T_n|=(\delta_1+1)\cdots(\delta_n+1)$ one estimates
$$\tfrac12 D^{r^\alpha}\!\! \le |T_n|\le \tfrac 12r\cdot D^{r^\alpha}\!\!\!,\;\;\text{with $D:=2^{\varphi^2/\sqrt 5}\approx 2.251$ and $\alpha:=\log\varphi\approx 0.6942$}.$$
Here $\varphi\approx 1.618$ denotes the golden ratio.
The growth is therefore \emph{intermediate}.
\end{example}

We claim that $(\mathcal T, \omega)$ is \as\ isomorphic to a particular deterministic tree.
By \cref{structure} it is \as\ an apocentric limit of the $T_n$.
But as we show below (\cref{res:highly_symmetric}), the $T_n$ are highly symmetric in~that any two apocentric copies $\tau,\tau'\prec_{n-1} T_n$ are in fact indistinguishable by symmetry. 
In consequence, there exists (up to symmetry) only one possible inclusion chain leading to an apocentric limit, and $\mathcal T$ is the unique tree obtained in this way.

In the following we establish the symmetry of $T_n$ in an even stronger form:


\begin{lemma}\label{res:highly_symmetric}
With the sequence of \cref{ex:deterministic} the trees $T_n$ are transitive on~apo\-centric copies $\tau\prec_m T_n$ for all $m\le n$. This means, for any two apocentric copies $\tau_1,\tau_2\prec_m T_n$ there is a symmetry $\phi\in\Aut(T_n)$ sending $\phi(\tau_1)=\tau_2$.
\end{lemma}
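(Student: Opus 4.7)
The plan is to argue by induction on $n$, but with the strengthened hypothesis $(\star_n)$: the stabilizer of the center of $T_n$ inside $\Aut(T_n)$ still acts transitively on apocentric copies $\prec_m T_n$ for every $0 \le m \le n$. The base cases $n \in \{0,1,2\}$ are direct checks; for the inductive step with $n \ge 3$, note that every apocentric copy $\prec_m T_n$ with $m \le n-1$ is contained in one of the apocentric copies $\tau_1,\dots,\tau_{\delta_n}$. Thus $(\star_n)$ splits into (I) transitivity on $\{\tau_1,\dots,\tau_{\delta_n}\}$ and (II) transitivity inside each $\tau_i$ on its apocentric $\prec_m$ sub-copies. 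Part (II) will be immediate: the inductive hypothesis applied to $\tau_i \cong T_{n-1}$ supplies an automorphism fixing the center of $\tau_i$, and this extends by identity across $T_n \setminus \tau_i$ because $\tau_i$ is attached to the remainder of $T_n$ only through its center, and the extension fixes the center of $T_n$, which lies in $\tau_0$.

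The crux will be (I), where the Fibonacci identity $\delta_n = \delta_{n-1}\delta_{n-2}$ is indispensable. Since $\tau_0 \cong T_{n-1}$ has exactly $\delta_{n-2}\delta_{n-1} = \delta_n$ apocentric $\prec_{n-3}$ sub-copies, the maximally uniform distribution forces each such sub-copy $\sigma' \subseteq \tau_0$ to receive exactly one outwards edge; this picks out a unique ``marked'' apocentric vertex $M(\sigma') \in \sigma'$, and each $\tau_i$ is attached at some marked vertex. To send $\tau_i$ (attached at $v_i := M(\sigma_i)$) to $\tau_j$ (attached at $v_j := M(\sigma_j)$), I will first invoke the induction on $\tau_0$ with $m = n-3$ to obtain $\phi_0 \in \Aut(\tau_0)$ fixing the center of $\tau_0$ with $\phi_0(\sigma_i) = \sigma_j$; this $\phi_0$ may well disturb the marking, so I will correct it sub-copy by sub-copy, using the induction on $T_{n-3}$ with $m=0$ to produce an automorphism $\psi_{\sigma'}$ of each $\sigma'$ fixing the center of $\sigma'$ and sending $\phi_0(M(\phi_0^{-1}(\sigma')))$ back to $M(\sigma')$. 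Because the $\prec_{n-3}$ sub-copies of $\tau_0$ are vertex-disjoint and each one is attached to the rest of $\tau_0$ only through its center, these $\psi_{\sigma'}$ will assemble into a single $\psi \in \Aut(\tau_0)$, and $\phi := \psi \circ \phi_0$ will fix the center, satisfy $\phi(v_i) = v_j$, and preserve $M$.

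Finally, because $\phi$ preserves $M$, it lifts to an automorphism of $T_n$ by mapping each apocentric $\tau_k$ (attached at a marked vertex $v$) via the canonical isomorphism $T_{n-1} \cong T_{n-1}$ onto the unique apocentric copy attached at $\phi(v)$; this lift will fix the center of $T_n$ and carry $\tau_i$ to $\tau_j$, completing (I). The step I expect to be the main obstacle is precisely the marking-correction argument: checking that the local adjustments $\psi_{\sigma'}$ can be combined into a legitimate global automorphism of $\tau_0$ preserving $M$ everywhere at once. This will hinge on the structural fact that in the recursive construction every apocentric sub-copy is attached to its parent only through its center, so center-fixing automorphisms of sub-copies always extend by identity to the ambient tree; verifying this cleanly for $\tau_0$, and simultaneously across all $\prec_{n-3}$ sub-copies, is where the bulk of the bookkeeping will sit.
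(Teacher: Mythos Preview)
Your argument is correct and follows essentially the same route as the paper's: induction on $n$, using the Fibonacci identity $\delta_n=\delta_{n-1}\delta_{n-2}$ to set up a bijection between the apocentric copies $\prec_{n-1} T_n$ and the apocentric sub-copies $\prec_{n-3}\tau_0$, permuting the latter via the inductive hypothesis, correcting the marked attachment vertices sub-copy by sub-copy, and then lifting to $T_n$. Your strengthened hypothesis $(\star_n)$---that the center-stabilizer already acts transitively---is precisely what is needed for the piecewise-defined corrections to glue into genuine automorphisms; the paper performs the same correction steps (its $\phi_1^i$ and $\phi_4$) but leaves implicit that these can be chosen to fix the centers of the respective sub-copies, and your formulation (together with the extra base case $n=2$) makes this explicit.
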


\begin{proof}
The proof is by induction on $n$. The cases $n\in\{0,1\}$ are trivially verified~and form the induction basis. Suppose that the statement is proven up to some $n\ge 1$; we prove the statement for $T_{n+1}$.

Let $\tau_0\prec_n T_{n+1}$ denote the central copy, and let $\tau_1,...,\tau_{\delta_{n+1}} \prec_n T_{n+1}$ denote the apocentric copies.
From our specific choice of sequence follows that the apocentric copies $\prec_n T_{n+1}$ are in one-to-one relation with the apocentric copies $\prec_{n-2} \tau_0$: the expected number of outwards adjacencies of an apocentric $\hat\tau\prec_{n-2}\tau_0$ is
$$
\frac{\delta_1\cdots\delta_{n-2}}{\delta_1\cdots\delta_n} \cdot \delta_{n+1} 
= \frac{\delta_1\cdots\delta_{n-2}}{\delta_1\cdots\delta_n} \cdot \delta_{n-1}\delta_n = 1,
$$
and by the \enquote{maximally uniform distribution of adjacencies} (\cf\ \cref{rem:max_uniform}, especially~equation \eqref{eq:max_uniform}) the actual number of adjacencies is therefore exactly one.
In other words, there are exactly $\delta_{n+1}$ apocentric copies $\hat\tau_1,...,\hat\tau_{\delta_{n+1}}\prec_{n-2} \tau_0$ and they are in one-to-one relation with the $\tau_i$.
We can assume an enumeration so that $\hat\tau_i$ and $\tau_i$ are connected by an edge, whose end in $\hat\tau_i$ we call $x_i$. 

Fix two apocentric copies $\rho_1,\rho_2\prec_m T_{n+1}$ for some $m\le n+1$.
The goal is to construct a symmetry of $T_{n+1}$ that sends $\rho_1$ onto $\rho_2$. 

Let $i_k\in\NN$ be indices so that $\rho_k\prec_m \tau_{i_k}$.
By induction hypothesis there is a symmetry $\phi_0\in\Aut(\tau_0)$ that maps $\hat\tau_{i_1}$ onto $\hat\tau_{i_2}$. More generally, $\phi_0$ permutes the trees $\hat\tau_1,...,\hat\tau_n$, that is, $\phi_0$ sends $\hat\tau_i$ onto $\hat\tau_{\sigma(i)}$ for some permutation $\sigma\in\Sym(n)$.
It holds $\sigma(i_1)=i_2$.
Note however that $\phi_0$~does not necessarily map $x_i$ onto $x_{\sigma(i)}$. 
We can fix this: invoking the induction hypothesis again, for each $i\in\{1,...,n\}$ exists a symmetry $\phi_1^i\in\Aut(\hat\tau_i)$ that sends $\phi_0(x_i)$ onto $x_{\sigma(i)}$. The map
$$
\phi_2(x):=\begin{cases}
(\phi_1^i\circ\phi_0)(x) & \text{if $x\in \hat\tau_i$} \\
\phi_0(x) & \text{otherwise}
\end{cases}.
$$
is then a symmetry of $\tau_0$ that sends the pair $(\hat\tau_i,x_i)$ onto $(\hat\tau_{\sigma(i)},x_{\sigma(i)})$.
Having this, $\phi_2$ extends to a symmetry $\phi_3\in\Aut(T_{n+1})$ that necessarily sends $\tau_i$ onto $\tau_{\sigma(i)}$.
In particular, it sends $\tau_{i_1}$ (which contains $\rho_1$) onto $\tau_{i_2}$ (which contains $\rho_2$).
As before, $\phi_3$ does not necessarily send $\rho_1$ onto $\rho_2$ right away; but also as before, this can be fixed: by induction hypothesis there is a symmetry $\phi_4\in\Aut(\tau_{i_2})$ that sends $\phi_3(\rho_1)$ onto $\rho_2$. The map
$$
\phi_5(x):=\begin{cases}
(\phi_4\circ\phi_3)(x) & \text{if $x\in \tau_{i_1}$} \\
\phi_3(x) & \text{otherwise}
\end{cases}.
$$
is a symmetry of $T_{n+1}$ that sends $\rho_1$ onto $\rho_2$.
\end{proof}



Unimodular random rooted trees that are \as\ isomorphic to a unique tree of even smaller uniform growth can be construct\-ed by setting $\delta_{n+1}:=\delta_n\delta_{n-1}$ for only some $n$,\nlspace and $\delta_{n+1}:=\delta_n$ otherwise.
However, since Benjamini-Schramm limits of mixed sequences have measure zero on every countable set of trees, this approach cannot yield examples with uniform growth below $r^{\log\log r}$.
One might wonder whether this is an artifact of our construction or a general phenomenon (see also \cref{q:deterministic_unimodular}).


\section{Concluding remarks and open questions}
\label{sec:last}

Given a suitable function \gFun, we constructed deterministic trees as well as unimodular random rooted trees of uniform volume growth $g$.
We conclude this article with some open question.

\subsection{Subgraphs of uniform growth}

Our initial approach for constructing trees of uniform intermediate growth (which is not part of this paper) was to start from just any graph of intermediate growth (such as a Cayley graph of the Grigorchuk group \cite{grigorchuck}), and extract a spanning tree that inherits this growth in some way.\nls
Ironically, working out the details of this extraction led to an understanding of~the~desired trees that allowed us constructing them without a need for the ambient graph.
Still, the question remained:


\begin{question} \label{q:uniform_spanning_tree}
Given a graph $G$ of uniform growth $g$, is there a spanning tree (or just any embedded tree) of the same uniform growth?
\end{question}


More generally one can ask



\begin{question}
If $G$ is of uniform growth $g$, and $h\in O(g)$ (perhaps assuming some niceness conditions such as super-additivity and log-concavity), is there a subgraph $H\subset G$ of uniform growth $h$?
\end{question}


As shown below, an embedded tree $T\subset G$ of the same uniform growth (as we ask for in \cref{q:uniform_spanning_tree}) is not necessarily quasi-isometric to $G$ (in the sense~of~\cite[Chapter 5]{loh2017geometric}).

\begin{example}
Consider a family of cycles chained together to form the graph~$G$~as depicted in \cref{fig:chain_of_cycles}.
$G$ is the homeomorphic image of the infinite path with fibers~of size at most two, hence of linear growth.
Removing one edge from each cycle~yields a spanning tree of linear growth.
However, $G$ has infinite perimeter, and hence no quasi-isometric spanning tree. 

\begin{figure}[h!]
    \centering
    \includegraphics[width=0.65\textwidth]{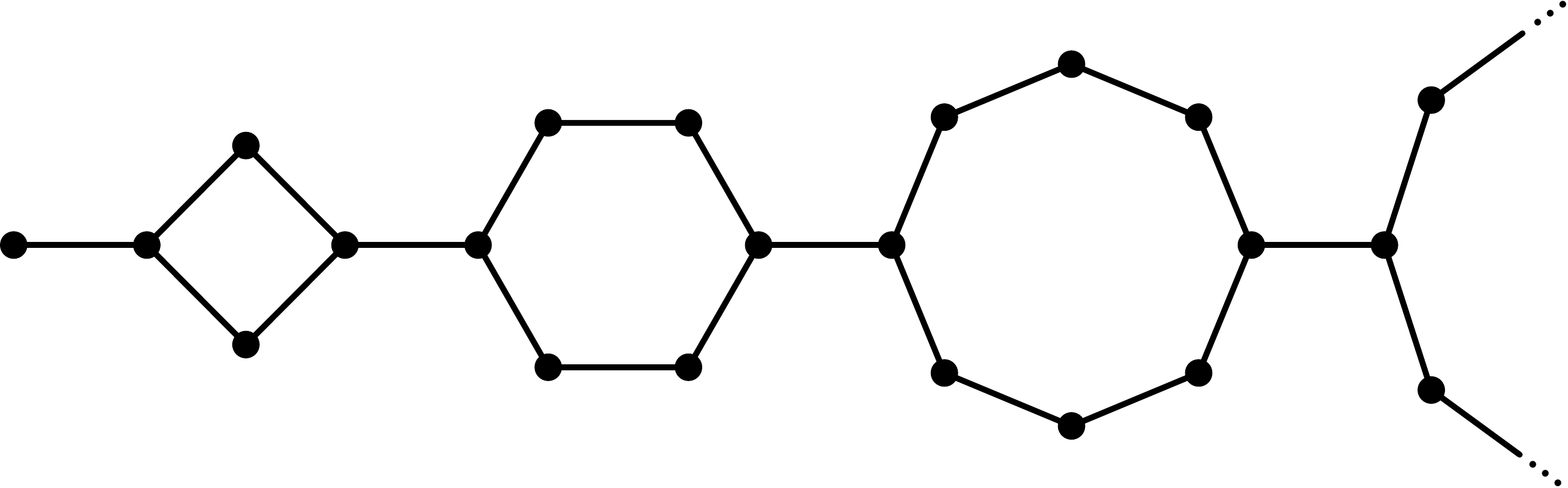}
    \caption{A graph with a spanning tree of the same uniform growth~but with no quasi-isometric spanning tree.}
    \label{fig:chain_of_cycles}
\end{figure}
\end{example}

\subsection{Beyond the construction} 

The unimodular random rooted graphs of uniform volume growth constructed in \cref{sec:unimodular} are Benjamini-Schramm limits of the particular sequence $T_n$. 
We found a change in structure roughly at growth $r^{\log\log r}$ and it is not clear to us whether this is an artifact of our construction or if it points to a more fundamental phase change phenomenon in~unimodular trees of uniform growth.

\begin{question}
To what extent are unimodular trees with growths on either side~of the threshold $r^{\log\log r}$ structurally different?
\end{question}

In light of \cref{ex:deterministic} and the discussion below it we also wonder the following:

\begin{question}\label{q:deterministic_unimodular}
Is there a random rooted tree $(\mathcal T,\omega)$ that satisfies simultaneously~all of the following three properties?:
\begin{myenumerate}
    \item $(\mathcal T,\omega)$ is unimodular.
    \item $(\mathcal T,\omega)$ is \as\ isomorphic to a particular deterministic tree.
    \item $(\mathcal T,\omega)$ is of super-linear uniform volume growth $g\in O(r^{\log\log r})$.
\end{myenumerate}
\end{question}


Questions about threshold phenomena and essentially deterministic unimodular graphs are of course equally meaningful for graph classes other than trees.

\bibliographystyle{abbrv}
\bibliography{literature}

\begin{thebibliography}{10}

\bibitem{abert2022co}
M.~Abert, M.~Fraczyk, and B.~Hayes.
\newblock Co-spectral radius, equivalence relations and the growth of
  unimodular random rooted trees.
\newblock {\em arXiv preprint arXiv:2205.06692}, 2022.

\bibitem{ambj1997quantum}
J.~Ambj, J.~Ambj{\o}rn, B.~Durhuus, T.~Jonsson, O.~Jonsson, et~al.
\newblock {\em Quantum geometry: a statistical field theory approach}.
\newblock Cambridge University Press, 1997.

\bibitem{amir2022branching}
G.~Amir and S.~Yang.
\newblock The branching number of intermediate growth trees.
\newblock {\em arXiv preprint arXiv:2205.14238}, 2022.

\bibitem{angel2003growth}
O.~Angel.
\newblock Growth and percolation on the uniform infinite planar triangulation.
\newblock {\em Geometric And Functional Analysis}, 13(5):935--974, 2003.

\bibitem{babaigrowth}
L.~Babai.
\newblock The growth rate of vertex-transitive planar graphs.
\newblock {\em SODA '97: Proceedings of the eighth annual ACM-SIAM symposium on
  Discrete algorithms}, pages 564--573, 1997.

\bibitem{benjamini2021triangulations}
I.~Benjamini and A.~Georgakopoulos.
\newblock Triangulations of uniform subquadratic growth are quasi-trees.
\newblock {\em Annales Henri Lebesgue}, 5:905--919, 2022.

\bibitem{benjamini2011recurrence}
I.~Benjamini and O.~Schramm.
\newblock Recurrence of distributional limits of finite planar graphs.
\newblock In {\em Selected Works of Oded Schramm}, pages 533--545. Springer,
  2011.

\bibitem{curien}
N.~Curien.
\newblock {\em Random graphs}.
\newblock 2017.

\bibitem{grigorchuck}
R.~I. Grigorchuck.
\newblock Degrees of growth of finitely generated groups and the theory of
  invariant means.
\newblock {\em Izvestiya Akademii Nauk SSSR. Seriya Matematicheskaya},
  48(5):939--985, 1984.

\bibitem{loh2017geometric}
C.~L{\"o}h.
\newblock {\em Geometric group theory}.
\newblock Springer, 2017.

\bibitem{aldouslyons}
R.~Lyons and D.~Aldous.
\newblock Processes on unimodular random networks.
\newblock {\em Electronic Journal of Probability}, 12(54):1454--1508, 2007.

\bibitem{timar2014stationary}
{\'A}.~Tim{\'a}r.
\newblock A stationary random graph of no growth rate.
\newblock In {\em Annales de l'IHP Probabilit{\'e}s et statistiques},
  volume~50, pages 1161--1164, 2014.

\bibitem{trofimov1985graphs}
V.~I. Trofimov.
\newblock Graphs with polynomial growth.
\newblock {\em Mathematics of the USSR-Sbornik}, 51(2):405, 1985.

\end{thebibliography}

\newpage
\appendix

\section{Existence and properties of $\gamma$}
\label{sec:gamma_properties}

Recall the function $\gamma\:\NN\to\RR$ introduced in the proof of \cref{res:conditions_in_g}, defined by $\gamma(1):=25$ and
\begin{equation}\label{eq:gamma}
 \gamma(\delta):=\prod_{k=0}^\infty \Big(1+\frac2{\delta^{2^k}}\Big)^{\!2},\quad\text{whenever $\delta\ge 2$}.   
\end{equation}
It is evident from the definition that $\gamma(\delta)> 1$. 
We show that it is well-defined:

\begin{lemma}\label{res:estimate_gamma}
$\gamma(\delta)$ is well-define, \ie\ the infinite product \eqref{eq:gamma} converges. 
%
\end{lemma}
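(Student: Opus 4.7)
The plan is to apply the standard criterion for convergence of an infinite product with non-negative terms, namely that $\prod_{k=0}^\infty (1+a_k)$ converges to a finite positive value if and only if $\sum_{k=0}^\infty a_k < \infty$ (with $a_k\ge 0$). Equivalently, one can take logarithms and use the elementary estimate $\log(1+x) \le x$ for $x \ge 0$.

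Concretely, I would set $a_k := (1+2/\delta^{2^k})^2 - 1 = 4/\delta^{2^k} + 4/\delta^{2^{k+1}}$, so that $\gamma(\delta) = \prod_k (1 + a_k)$, and reduce the question to showing $\sum_k a_k < \infty$. This in turn reduces to bounding $\sum_k \delta^{-2^k}$.

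The key estimate is that, for $\delta \ge 2$, the exponents $2^k$ grow doubly exponentially, so $\delta^{2^k}$ majorizes any single exponential. In particular, a convenient inductive bound is $\delta^{2^k} \ge 2^{2^k} \ge 2^{k+1}$ for all $k \ge 0$ (the second inequality follows from $2^k \ge k+1$, easily verified by induction). This gives $2/\delta^{2^k} \le 2^{-k}$, and hence $\sum_k 1/\delta^{2^k} \le \sum_k 2^{-k-1} = 1 < \infty$, so $\sum_k a_k$ is dominated by a convergent geometric series.

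There is no real obstacle here: the statement is essentially a routine convergence check, and the doubly exponential growth of the exponents makes any reasonable comparison sufficient. The only thing to be mildly careful about is to apply the product criterion in its correct form (absolute convergence of $\sum a_k$ implies convergence of $\prod (1+a_k)$ to a positive limit, not merely a finite one), which is standard and ensures $\gamma(\delta) \in (1,\infty)$ as required for the subsequent lemmas on monotonicity and on the property $(*)$ used in the proof of \cref{res:conditions_in_g}.
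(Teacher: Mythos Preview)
Your argument is correct: the standard product criterion together with the geometric bound $2/\delta^{2^k}\le 2^{-k}$ is entirely sufficient to establish convergence, and is arguably the cleanest way to dispatch the lemma as stated.

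The paper takes a somewhat different route. Rather than passing through the series criterion, it expands the product directly as
\[
\prod_{k=0}^\infty\Big(1+\frac{2}{\delta^{2^k}}\Big)=\sum_{i=0}^\infty \frac{2^{b(i)}}{\delta^i},
\]
where $b(i)$ is the number of ones in the binary expansion of $i$, and then uses the bound $b(i)\le i/2$ for $i\ge 10$ to compare with a geometric series in $\sqrt 2/\delta$. The payoff is not merely convergence but an explicit finite-sum upper bound (equation \eqref{eq:gamma_upper_bound}) that can be evaluated numerically. This is what feeds directly into \cref{res:gamma_bounds}, where the inequalities $\gamma(2)<25/2$ and $\gamma(3)<5$ are read off from that formula. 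Your approach, while more elegant for pure convergence, would need to be supplemented (e.g.\ by truncating the product and bounding the tail) to recover bounds sharp enough for the case analysis in \cref{res:gamma_property_d_d2}.
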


\begin{proof}
$\gamma(\delta)$ is the square of
 \[
 \prod_{k=0}^\infty\Big(1+\frac{2}{\delta^{2^k}}\Big)
 =\sum_{i=0}^\infty\frac{2^{b(i)}}{\delta^i},
 \] 
 where $b(i)$ denotes the number of 1's in the binary representation of $i$. Since $b(i)\leq \lfloor\log_2(i)\rfloor+1\leq \frac{i}{2}$ for $i\geq 10$, a geometric series estimation yields 
 \begin{equation}\label{eq:gamma_upper_bound}
 \prod_{k=0}^\infty\Big(1+\frac{2}{\delta^{2^k}}\Big)
 \,\leq\, \sum_{i=0}^9\frac{2^{b(i)}}{\delta^i}
    +\sum_{\mathclap{i=10}}^\infty\frac{\sqrt 2^{\kern0.1ex i}}{\delta^i}
 \,\leq\, \sum_{i=0}^9\frac{2^{b(i)}}{\delta^i}
    +
    \frac{2+\sqrt 2}{32}
    ,
 \end{equation}
which is finite. 
%
\end{proof}


\begin{corollary}\label{res:gamma_bounds}
The following bounds apply:
\begin{align*}
\arraycolsep=0.5ex
\begin{array}{rcccl}
9&<&\gamma(2)&<&25/2 \\
25/9&<&\gamma(3)&<&5.
\end{array}
\end{align*}
\begin{proof}
The upper bounds can be verified from the proof of \eqref{eq:gamma_upper_bound}.
The lower bounds follow via
\begin{align*}
\gamma(2) &> \Big(1+\frac2{2}\Big)^2\Big(1+\frac24\Big)^2 = 9, \\
\gamma(3) &> \Big(1+\frac23\Big)^2 = \frac{25}9.
\end{align*}
\end{proof}
\end{corollary}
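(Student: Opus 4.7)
The plan is to treat the two directions independently, exploiting that every factor in
$\gamma(\delta)=\prod_{k\ge 0}(1+2/\delta^{2^k})^2$
is strictly greater than $1$. For the lower bounds this immediately implies that truncating the product at any finite stage yields a valid lower bound, so it suffices to pick enough factors to hit the stated number. Two factors suffice for $\delta=2$, namely $(1+2/2)^2(1+2/4)^2=4\cdot(3/2)^2=9$, and a single factor suffices for $\delta=3$, namely $(1+2/3)^2=25/9$. Both match the claimed bounds exactly, so no further work is needed on this side.

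For the upper bounds I would recycle the estimates from the proof of \cref{res:estimate_gamma}. Expanding the product gives $\prod_{k\ge 0}(1+2/\delta^{2^k})=\sum_{i\ge 0}2^{b(i)}/\delta^i$, where $b(i)$ is the number of $1$'s in the binary expansion of $i$; then the bound $b(i)\le i/2$ for $i\ge 10$ yields
$$\prod_{k\ge 0}\Big(1+\tfrac{2}{\delta^{2^k}}\Big)\;\le\; \sum_{i=0}^{9}\frac{2^{b(i)}}{\delta^i}\;+\;\sum_{i=10}^{\infty}\Big(\tfrac{\sqrt 2}{\delta}\Big)^{i}.$$
The first sum has only ten terms and is straightforward to evaluate for $\delta\in\{2,3\}$; the second is a closed-form geometric series, equal to $(2+\sqrt 2)/32$ when $\delta=2$ and considerably smaller when $\delta=3$. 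Squaring the resulting upper bounds should then deliver $\gamma(2)<25/2$ and $\gamma(3)<5$.

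The only delicate point is whether the nine-term truncation provides enough cushion for $\gamma(2)$. A quick mental computation gives the ten-term sum near $3.39$ with tail roughly $0.11$, totalling about $3.50$ and squaring to about $12.25$, which just clears $25/2=12.5$. If this margin felt uncomfortably thin for a written proof, the obvious remedy is to truncate at a few more terms; the geometric decay of the tail guarantees that arbitrary extra cushion can be created with little additional effort. For $\gamma(3)$ the same computation is far looser, so no such concern arises.
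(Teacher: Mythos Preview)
Your proposal is correct and follows essentially the same approach as the paper: the lower bounds are obtained by truncating the product at exactly the same factors the paper uses, and the upper bounds by squaring the estimate \eqref{eq:gamma_upper_bound} from \cref{res:estimate_gamma}. The only difference is that you spell out the numerical check for $\gamma(2)<25/2$ (the paper leaves this to the reader); your estimate $\approx 12.23$ is accurate and the margin, while slim, is genuine.
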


\begin{corollary}\label{res:gamma_decreasing}
$\gamma(\delta)$ is strictly decreasing in $\delta$.
\begin{proof}
For $2\le \delta_1<\delta_2$ follows $\gamma(\delta_1)<\gamma(\delta_2)$ straight from the definition.
For~$\delta_1=1$ we use the bounds in \cref{res:gamma_bounds} and find
$\gamma(1) = 25 > 25/2 > \gamma(2).$
\end{proof}
\end{corollary}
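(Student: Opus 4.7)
The plan is to split the proof naturally into two cases according to whether both arguments lie in the range $\delta \ge 2$, where the product definition \eqref{eq:gamma} is operative, or whether one of them equals $1$, where $\gamma$ is defined by fiat as $25$.

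For the first case, let $2 \le \delta_1 < \delta_2$. I would argue directly from the definition: each factor $(1 + 2/\delta^{2^k})^2$ is strictly decreasing in $\delta$ because $\delta \mapsto 2/\delta^{2^k}$ is strictly decreasing on the positive reals. Since every factor exceeds $1$ and convergence of the product was established in \cref{res:estimate_gamma}, the factor-wise strict inequality passes to the infinite product; concretely, the ratio $\gamma(\delta_1)/\gamma(\delta_2)$ is bounded below by the ratio of just the $k=0$ factors, $(1+2/\delta_1)^2/(1+2/\delta_2)^2 > 1$. Hence $\gamma(\delta_1) > \gamma(\delta_2)$.

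For the second case, $\delta_1 = 1$ and $\delta_2 \ge 2$, it suffices (by the first case) to show $\gamma(1) > \gamma(2)$. This cannot be read off the definition because $\gamma(1) = 25$ is stipulated rather than derived from the product. Instead I would invoke the explicit upper bound from \cref{res:gamma_bounds}, namely $\gamma(2) < 25/2 < 25 = \gamma(1)$, to close the gap. Combining both cases yields $\gamma(1) > \gamma(2) > \gamma(3) > \cdots$, proving strict monotonicity.

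The only real obstacle is the bookkeeping around $\delta = 1$: the product formula does not interpolate to $\delta_1 = 1$ in any useful way (it would diverge), so the numerical comparison $25 > \gamma(2)$ supplied by \cref{res:gamma_bounds} is doing genuine work. Everything else is a one-line consequence of each factor being strictly monotone.
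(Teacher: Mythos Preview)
Your proposal is correct and follows essentially the same two-case split as the paper: for $\delta\ge 2$ the monotonicity is read off the product definition factor by factor, and the boundary case $\delta_1=1$ is handled via the numerical bound $\gamma(2)<25/2$ from \cref{res:gamma_bounds}. Your version is slightly more explicit about why the factor-wise inequality survives the infinite product, but the argument is the same.
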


\begin{proposition}\label{res:d_gamma_monotone}
$\delta^2\gamma(\delta)$ is strictly increasing for $\delta\ge 3$.
\begin{proof}
The function in question is the square of
$$f(\delta):=\delta \prod_{k=0}^\infty \Big(1+\frac2{\delta^{2^k}}\Big).$$
If we consider $\delta$ as a continuous parameter, we can compute the derivative of $\log f(\delta)$ \wrt\ $\delta$, which yields
$$(\log f(\delta))'=\frac1\delta - \frac2\delta\sum_{k=0}^\infty \frac{2^k}{\delta^{2^k}+2}.$$
For $\delta^2 \gamma(\delta)$ to be increasing, we require this expression to be positive, which, after some rearranging, is equivalent to
$$\sum_{k=0}^\infty\frac{2^k}{\delta^{2^k}+2}<\frac12.$$
The sum is clearly decreasing in $\delta$ and so it suffices to verify the case $\delta=3$:
\begin{align*}
\sum_{k=0}^\infty \frac{2^k}{3^{2^k}+2} 
&=\frac15+\frac29 + \sum_{k=2}^\infty \frac{2^k}{3^{2^k}+2}
\\&<\frac{21}{55} + \sum_{k=2}^\infty \frac{2^k}{3^{2^k}}
<\frac{21}{55} + \sum_{\ell=4}^\infty \frac{\ell}{3^{\ell}} = \frac{21}{55}+\frac1{12}<\frac12.
\end{align*}
\end{proof}
\end{proposition}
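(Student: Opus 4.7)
The plan is to reduce the claim to a numerical inequality by taking a logarithmic derivative, and then verify that inequality by splitting off finitely many terms and dominating the tail by a geometric series.

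First, observe that both $\delta^2$ and $\gamma(\delta)$ are positive for $\delta \ge 3$, so it suffices to prove the monotonicity of
\[
f(\delta) \,:=\, \delta \prod_{k=0}^{\infty}\Big(1 + \frac{2}{\delta^{2^k}}\Big),
\]
since $\delta^2\gamma(\delta) = f(\delta)^2$. I would extend $\delta$ to a continuous variable in $[3,\infty)$ and differentiate $\log f$ term-by-term. The interchange of $\tfrac{d}{d\delta}$ and $\sum$ is justified by uniform convergence on $[3,\infty)$, which follows from the tail bound used in the proof of \cref{res:estimate_gamma}. Using $\frac{d}{d\delta}\log(1+2/\delta^{2^k}) = -2\cdot 2^k/(\delta(\delta^{2^k}+2))$, I would obtain
\[
(\log f(\delta))' \,=\, \frac{1}{\delta} - \frac{2}{\delta}\sum_{k=0}^{\infty}\frac{2^k}{\delta^{2^k}+2}.
\]
Hence monotonicity of $f$ (and thus of $\delta^2\gamma(\delta)$) on $[3,\infty)$ is equivalent to the single inequality
\[
S(\delta) \,:=\, \sum_{k=0}^{\infty}\frac{2^k}{\delta^{2^k}+2} \,<\, \frac{1}{2}.
\]

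Since each summand $2^k/(\delta^{2^k}+2)$ is plainly decreasing in $\delta$, the function $S(\delta)$ is decreasing in $\delta$, so it is enough to verify $S(3) < 1/2$. This is where the main (such as it is) work of the proof lies: I would compute the $k=0$ and $k=1$ terms explicitly, giving $\tfrac15 + \tfrac{2}{11} = \tfrac{21}{55}$, and then bound the tail $\sum_{k\ge 2} 2^k/(3^{2^k}+2)$ from above by $\sum_{k\ge 2} 2^k/3^{2^k}$. Because $3^{2^k}$ grows doubly-exponentially, the exponents $2^k$ dominate the numerators $2^k$ handily, and the substitution $\ell = 2^k$ lets one bound this tail by the geometric-type series $\sum_{\ell \ge 4} \ell/3^\ell = 1/12$. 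Combining gives $S(3) \le \tfrac{21}{55} + \tfrac{1}{12} < \tfrac12$.

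The only mildly subtle point is the legitimacy of the term-by-term differentiation, which I would handle with a Weierstrass $M$-test on $[3,\infty)$ using the same doubly-exponential decay; beyond that, the proof is essentially a one-line reduction followed by a numerical estimate, so there is no genuine obstacle. As a final remark, monotonicity on real $\delta \ge 3$ immediately implies the required monotonicity of $\delta^2\gamma(\delta)$ on integers $\delta \ge 3$, which is what the proposition asserts.
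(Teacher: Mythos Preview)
Your proposal is correct and follows essentially the same approach as the paper: reduce to $f(\delta)=\sqrt{\delta^2\gamma(\delta)}$, take the logarithmic derivative, reduce to the inequality $\sum_{k\ge 0} 2^k/(\delta^{2^k}+2)<1/2$, note monotonicity in $\delta$, and verify at $\delta=3$ via $1/5+2/11=21/55$ plus the tail bound $\sum_{\ell\ge 4}\ell/3^\ell=1/12$. You even add the justification of term-by-term differentiation via the Weierstrass $M$-test, which the paper leaves implicit.
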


    
    
    

The following property of $\gamma$ is used in the proof of \cref{res:conditions_in_g}.

\begin{lemma}\label{res:gamma_property_d_d2}
If $\delta_1\le(\delta_2+2)^2$ then $\delta_1\gamma(\delta_1)\le \delta_2^2\gamma(\delta_2)$.
\begin{proof}
We proceed by case analysis. A use of \cref{res:gamma_decreasing} (\ie\ $\gamma$ is decreasing) is indi\-cated by $(*)$, a use of \cref{res:d_gamma_monotone} (\ie\ $\delta^2\gamma(\delta)$ is increasing) by $(**)$. We also use the bounds proven in \cref{res:gamma_bounds}. 
\begin{enumerate}[topsep=0.7em,itemsep=0.7em]
\item
If $\delta_1\ge \delta_2^2$ then
\begin{align*}
    \delta_1\gamma(\delta_1) 
      &\overset{\mathclap{\smash{(*)}}}\le (\delta_2+2)^2 \gamma(\delta_2^2)
       = \delta_2^2 \Big(1+\frac2{\delta_2}\Big)^2 \prod_{k=0}^\infty \Big(1+\frac2{(\delta_2^2)^{2^k}}\Big)^2 
    \\[-1.5ex]&\!= \delta_2^2 \prod_{k=0}^\infty \Big(1+\frac2{\delta_2^{2^k}}\Big)^2
       \!= \delta_2^2 \gamma(\delta_2).
\end{align*}

\item
If $\delta_2 \le \delta_1< \delta_2^2$ then $\delta_1\gamma(\delta_1) \overset{\mathclap{\smash{(*)}}}< \delta_2^2\gamma(\delta_2)$ follows by factor-wise comparison

\item If $3\le\delta_1\le\delta_2$ then
$$\delta_1\gamma(\delta_1)<\delta_1^2\gamma(\delta_1) \overset{\smash{(**)}}\le \delta_2^2\gamma(\delta_2).$$

\item If $\delta_1 = 2$ and $\delta_2\ge 3$ then
$$\delta_1\gamma(\delta_1)=2\gamma(2) < 25 < 3^2\gamma(3) \overset{\smash{(**)}}\le \delta_2^2\gamma(\delta_2).$$

\item If $\delta_1 = 1$ and $\delta_2\ge 3$ then
$$\delta_1\gamma(\delta_1)=1\gamma(1)= 25 < 3^2\gamma(3) \overset{\smash{(**)}}\le \delta_2^2\gamma(\delta_2).$$

\item if $\delta_1 = 1$ and $\delta_2=2$ then
$$\delta_1\gamma(\delta_1)=1\gamma(1)= 25 < 36 < 2^2\gamma(2).$$
\end{enumerate}
\end{proof}
\end{lemma}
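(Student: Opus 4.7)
My plan is to split on the relative sizes of $\delta_1$ and $\delta_2$, with the main engine being a single telescoping identity. Observe that, since $(\delta_2^2)^{2^k}=\delta_2^{2^{k+1}}$, the product defining $\gamma(\delta_2^2)$ is exactly the product defining $\gamma(\delta_2)$ with its $k=0$ factor dropped. Writing this out gives, for $\delta_2\ge 2$,
$$\delta_2^2\gamma(\delta_2)\;=\;\delta_2^2\Big(1+\frac{2}{\delta_2}\Big)^{\!2}\gamma(\delta_2^2)\;=\;(\delta_2+2)^2\gamma(\delta_2^2).$$
This identity is precisely the reason the factor $(\delta_2+2)^2$ appears in the hypothesis and turns out to be the sharp threshold.

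The primary case I would handle first is $\delta_2^2\le\delta_1\le(\delta_2+2)^2$ with $\delta_2\ge 2$: the monotonicity of $\gamma$ (\cref{res:gamma_decreasing}) gives $\gamma(\delta_1)\le\gamma(\delta_2^2)$, and multiplying this by $\delta_1\le(\delta_2+2)^2$ and using the identity above at once yields $\delta_1\gamma(\delta_1)\le(\delta_2+2)^2\gamma(\delta_2^2)=\delta_2^2\gamma(\delta_2)$. The remaining regimes are considerably easier: for $\delta_2\le\delta_1<\delta_2^2$ (still $\delta_2\ge 2$), the inequality follows by comparing the two $\gamma$-products factor by factor together with the leading factor $\delta_1<\delta_2^2$; for $3\le\delta_1\le\delta_2$, the monotonicity of $\delta^2\gamma(\delta)$ from \cref{res:d_gamma_monotone} at once gives $\delta_1\gamma(\delta_1)\le\delta_1^2\gamma(\delta_1)\le\delta_2^2\gamma(\delta_2)$.

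The main obstacle is not conceptual but a careful enumeration of the low-value boundary cases: the regimes where $\delta_1\in\{1,2\}$ or $\delta_2\in\{1,2\}$ fall outside the reach of both the telescoping identity (which needs $\delta_2\ge 2$, so that $\gamma(\delta_2^2)$ is given by the product rather than by the convention $\gamma(1)=25$) and of \cref{res:d_gamma_monotone} (which requires $\delta\ge 3$). I would dispatch each such pair by a direct numerical comparison, using the explicit bounds $9<\gamma(2)<25/2$, $25/9<\gamma(3)<5$, and $\gamma(1)=25$ supplied by \cref{res:gamma_bounds}. This amounts to a short finite table of verifications; there is no hidden difficulty, but the case split is what takes some care to organise so that every admissible pair $(\delta_1,\delta_2)$ with $\delta_1\le(\delta_2+2)^2$ is covered exactly once.
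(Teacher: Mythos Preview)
Your proposal is correct and follows essentially the same route as the paper: the same telescoping identity $(\delta_2+2)^2\gamma(\delta_2^2)=\delta_2^2\gamma(\delta_2)$, the same three main regimes $\delta_1\ge\delta_2^2$, $\delta_2\le\delta_1<\delta_2^2$, and $3\le\delta_1\le\delta_2$, and the same plan to clear the remaining small-value pairs via the numerical bounds of \cref{res:gamma_bounds}. If anything, you are slightly more careful than the paper in flagging that the telescoping identity requires $\delta_2\ge 2$ (so that $\gamma(\delta_2^2)$ is given by the infinite product rather than the convention $\gamma(1)=25$); the paper's Case~1 silently assumes this.
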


\end{document}